\newcommand*{\rom}[1]{\expandafter\@slowromancap\romannumeral #1@}
  \theoremstyle{plain}
    \newtheorem{thm}{Theorem}[section]
    \newtheorem{proposition}[thm]{Proposition}
     \newtheorem{theorem}{Theorem}
   \newtheorem{lemma}[thm]{Lemma}
    \newtheorem{subsec}[thm]{}
\theoremstyle{definition}
    \newtheorem{definition}[thm]{Definition}
        \newtheorem{remark}[thm]{Remark}
\theoremstyle{remark}
\title{}
\author{}
\date{}
\begin{document}

\title[Non-abelian cohomology of Lie $H$-pseudoalgebras]{Non-abelian cohomology of Lie $H$-pseudoalgebras and inducibility of automorphisms}

\author{Apurba Das}
\address{Department of Mathematics,
Indian Institute of Technology, Kharagpur 721302, West Bengal, India.}
\email{apurbadas348@gmail.com, apurbadas348@maths.iitkgp.ac.in}

\maketitle

\begin{abstract}
  The notion of Lie $H$-pseudoalgebra is a higher-dimensional analogue of Lie conformal algebras. In this paper, we classify the equivalence classes of non-abelian extensions of a Lie $H$-pseudoalgebra $L$ by another Lie $H$-pseudoalgebra $M$ in terms of the non-abelian cohomology group $H^2_{nab} (L, M)$. We also show that the group $H^2_{nab} (L, M)$ can be realized as the Deligne groupoid of a suitable differential graded Lie algebra. Finally, we consider the inducibility of a pair of Lie $H$-pseudoalgebra automorphisms in a given non-abelian extension. We show that the corresponding obstruction can be realized as the image of a suitable Wells map in the context.
\end{abstract}

\medskip

\medskip

\medskip

\begin{center}
 {\em Mathematics Subject classification (2020).} 17B55, 17B56, 16S70, 17B40.

{\em Keywords.} Lie $H$-pseudoalgebras, Non-abelian cohomology, Deligne groupoid, Automorphisms, Wells map.   
\end{center}

\tableofcontents

\section{Introduction}\label{sec1}

\subsection{Lie $H$-pseudoalgebras} The notion of Lie $H$-pseudoalgebra over a cocommutative Hopf algebra $H$ was introduced by Bakalov, D'Andrea and Kac \cite{bakalov-andrea-kac} as a multivariable generalization of the concept of Lie conformal algebra \cite{dk,k}. A Lie $H$-pseudoalgebra is a left $H$-module $L$ equipped with a $H^{\otimes 2}$-linear map $[\cdot * \cdot]_L: L \otimes L \rightarrow H^{\otimes 2} \otimes_H L$, called the pseudobracket, satisfying skew-symmetry and the Jacobi identity (see Definition \ref{defn-lieh} for the detail explanation). Given a Lie $H$-pseudoalgebra $L$, one can associate a Lie algebra $\mathcal{L} = H^* \otimes_H L$ called the annihilation algebra of $L$. When $L$ is finitely generated as an $H$-module, and $H$ is Noetherian then the annihilation algebra $\mathcal{L}$ is a linearly compact Lie algebra. Moreover, there is a one-to-one correspondence between Lie $H$-pseudoalgebra representations of $L$ and discrete continuous representations of $\mathcal{L}$ satisfying a technical condition. Lie $H$-pseudoalgebras are also related to differential Lie algebras of Ritt and Hamiltonian formalism of the theory of nonlinear evolution equations \cite{ritt}. In \cite{bakalov-andrea-kac} the authors have introduced the cohomology of a Lie $H$-pseudoalgebra with coefficients in a representation and observed that the second cohomology group classifies the set of all equivalence classes of abelian extensions. See also \cite{wu} for the description of the cohomology in terms of Maurer-Cartan elements in a suitable graded Lie algebra. Later, various representation theoretic studies of simple Lie $H$-pseudoalgebras were made in \cite{bakalov-andrea-kac2,bakalov-andrea-kac3,andrea-marchei}. See also \cite{liberati,retakh,wu2} for some other types of pseudoalgebras and associated theories.

\subsection{Non-abelian extensions and cohomology} The non-abelian cohomology was first considered by Eilenberg and Maclane to study group extensions with non-abelian kernel \cite{em} (see also \cite{hoch-serre}). Non-abelian cohomology generalizes the classical group cohomology (with coefficients in a module) that describes abelian extensions. Subsequently, non-abelian extensions and non-abelian cohomology for Lie algebras were studied by Hochschild \cite{hoch}. The coefficient in the non-abelian cohomology is not a representation space but another Lie algebra with a map that is a representation up to an inner automorphism. Please see \cite{neeb,fial-pen,fregier} for recent developments on non-abelian extensions of Lie groups, Lie algebras and some other types of algebraic structures. In particular, Fr\'{e}gier \cite{fregier} showed that the non-abelian cohomology group of a Lie algebra with values in another Lie algebra can be seen as the Deligne groupoid of a suitable differential graded Lie algebra. Further, the classification of abelian extensions of a Lie algebra by the Chevalley-Eilenberg cohomology can be realized as the tangent cohomology complex of the Deligne groupoid.

\subsection{The inducibility problem and the Wells exact sequence} Given an extension of some algebraic structure, the problem of inducibility of a pair of automorphisms is another interesting problem of research. This question was first considered by Wells \cite{wells} for abelian extensions of abstract groups and further studied in \cite{passi,jin-liu}. In particular, Wells constructed a map (now popularly known as the Wells map) and showed that the obstruction for the inducibility of a pair of group automorphisms lies in the image of that map. He also finds a short exact sequence connecting the Wells map and various automorphism groups. Given an abelian extension of Lie algebras, the inducibility problem and the Wells map were studied in \cite{bardakov}. However, the problem becomes more intricate when we start with non-abelian extensions of Lie algebras. This is simply because the domain and codomain of the Wells map will change. This problem can be stated as follows. Let $0 \rightarrow M \xrightarrow{i} E \xrightarrow{p} L \rightarrow 0$ be a given non-abelian extension of Lie algebras. For any $\gamma \in \mathrm{Aut}(E)$ with $\gamma (M) \subset M$, there is a pair of Lie algebra automorphisms $(\gamma|_M, p \gamma s)\in \mathrm{Aut}(M) \times \mathrm{Aut} (L)$, where $s$ is any section of the map $p$. A pair of Lie algebra automorphisms $(\beta, \alpha) \in \mathrm{Aut}(M) \times \mathrm{Aut} (L)$ is said to be inducible if there exists an automorphism $\gamma \in \mathrm{Aut}(E)$ with $\gamma (M) \subset M$ such that $(\gamma|_M, p \gamma s) = (\beta, \alpha)$. The inducibility problem then asks to find a necessary and sufficient condition for the inducibility of a pair of Lie algebra automorphisms.

\subsection{Layout of the present paper} Our aim in this paper is twofold. In the first part, we consider non-abelian extensions of Lie $H$-pseudoalgebras and also introduce the non-abelian cohomology $H^2_{nab} (L, M)$ of a Lie $H$-pseudoalgebra $L$ with values in another Lie $H$-pseudoalgebra $M$. We show that the set of all equivalence classes of non-abelian extensions of $L$ by $M$ can be classified by the cohomology group $H^2_{nab} (L,M)$. This result generalizes the classification result of abelian extensions \cite{bakalov-andrea-kac}. Next, given two Lie $H$-pseudoalgebras $L$ and $M$, we first construct a suitable differential graded Lie algebra $\mathfrak{g}$ using a generalization of the Nijenhuis-Richardson bracket (see \cite{wu} for the bracket). Finally, we show that the non-abelian cohomology $H^2_{nab} (L, M)$ can be realized as the Deligne groupoid of the differential graded Lie algebra $\mathfrak{g}$ constructed above. This result generalizes the work of Fr\'{e}gier in the context of pseudoalgebras. However, our proof is new and much simpler than \cite{fregier}. We also discuss abelian extensions in terms of the tangent cohomology complex. 

In the next part of the paper, we consider the inducibility of a pair of Lie $H$-pseudoalgebra automorphisms in a given extension. Unlike the Lie algebra case developed in \cite{bardakov}, we do not restrict ourselves to only abelian extensions. More precisely, we start with a non-abelian extension
 \begin{align*}
     \xymatrix{
     0 \ar[r] & (M, [\cdot * \cdot]_M) \ar[r]^i & (E, [\cdot * \cdot]_E) \ar[r]^p & (L, [\cdot * \cdot]_L) \ar[r] & 0
     }
     \end{align*}
of Lie $H$-pseudoalgebras and construct the Wells map $\mathcal{W} : \mathrm{Aut} (M) \times \mathrm{Aut}(L) \rightarrow H^2_{nab} (L, M)$ in the context. We show that a pair of Lie $H$-pseudoalgebra automorphisms $(\beta, \alpha) \in \mathrm{Aut} (M) \times \mathrm{Aut}(L)$ is inducible if and only if $\mathcal{W} ((\beta, \alpha)) = 0$. It shows that the image $\mathcal{W} ((\beta, \alpha))$ of the Wells maps is an obstruction for the inducibility of the pair $(\beta, \alpha)$ of Lie $H$-pseudoalgebra automorphisms. Finally, we derive a short exact sequence (a generalization of the Wells short exact sequence) connecting various automorphism groups and the Wells map.

\medskip

The paper is organized as follows. In Section \ref{sec2}, we recall some necessary background about Lie $H$-pseudoalgebras, their representations and cohomology. In Section \ref{sec3}, we study non-abelian extensions and the non-abelian cohomology of Lie $H$-pseudoalgebras. We interpret the non-abelian cohomology as the Deligne groupoid in Section \ref{sec4}. Finally, we consider the inducibility problem and construct the Wells short exact sequence in Section \ref{sec5}.

\section{Some background on Lie {\em H}-pseudoalgebras}\label{sec2}

In this section, we recall some necessary background on Lie $H$-pseudoalgebras. More precisely, we recall representations and cohomology of Lie $H$-pseudoalgebras following the references \cite{bakalov-andrea-kac,wu}.

Let $H$ be a cocommutative Hopf algebra with the comultiplication map $\Delta : H \rightarrow H^{\otimes 2}$ and the counit map $\varepsilon: H \rightarrow {\bf k}$. We follow Sweedler's notation to represent the images of the comultiplication map.

\medskip

Let $L$ be a left $H$-module. A ${\bf k}$-linear map $[\cdot * \cdot ]_L : L \otimes L \rightarrow H^{\otimes 2} \otimes_H L$ is said to be

(i) {\em $H^{\otimes 2}$-linear} if $[fx * gy]_L = ((f \otimes g) \otimes_H 1) [x * y]_L$, for all $x, y \in L$ and $f, g \in H$. Explicitly,
\begin{align*}
    \text{ if } [x * y]_L = \sum_{i} (f_i \otimes g_i) \otimes_H e_i ~ \text{ then } ~ [fx * gy]_L = \sum_{i} (ff_i \otimes gg_i) \otimes_H e_i.
\end{align*}

(ii) {\em skew-symmetric} if $[y * x]_L = - (\sigma_{12} \otimes_H 1) [x * y]_L$, for all $x, y \in L$. Here $\sigma_{12} : H^{\otimes 2} \rightarrow H^{\otimes 2}$ is the flip map. The skew-symmetric condition simply means that
\begin{align*}
    \text{ if } [x * y]_L = \sum_{i} (f_i \otimes g_i) \otimes_H e_i ~ \text{ then } ~ [y * x]_L = - \sum_{i} (g_i \otimes f_i) \otimes_H e_i.
    \end{align*}

Let $[ \cdot * \cdot]_L : L \otimes L \rightarrow H^{\otimes 2} \otimes_H L$ be a $H^{\otimes 2}$-linear map. Then one can define two $H^{\otimes 3}$-linear maps 
\begin{align*}
    [[ \cdot * \cdot]_L * \cdot]_L : L \otimes L \otimes L \rightarrow H^{\otimes 3} \otimes_H L ~~~\text{ and } ~~~  [\cdot * [ \cdot * \cdot]_L]_L : L \otimes L \otimes L \rightarrow H^{\otimes 3} \otimes_H L
\end{align*}
    as follows. Let $x, y, z \in L$. If 
$[x * y]_L = \sum_{i} (f_i \otimes g_i) \otimes_H e_i$  and say $[e_i * z]_L = \sum_{j} (f_{ij} \otimes g_{ij}) \otimes_H e_{ij},$
then $[[x * y]_L * z]_L$ is the element of $H^{\otimes 3} \otimes_H L$ given by
\begin{align}\label{xy-z}
    [[x * y]_L * z]_L = \sum_{i, j} \big( f_i f_{ij (1)} \otimes g_i f_{ij (2)} \otimes g_{ij}   \big) \otimes_H e_{ij}.
\end{align}
Similarly, if $[y * z]_L = \sum_i (h_i \otimes l_i) \otimes_H d_i$ and say $[x * d_i]_L = \sum_j (h_{ij} \otimes l_{ij}) \otimes_H d_{ij}$, then
\begin{align}\label{x-yz}
    [x * [y * z]_L]_L = \sum_{i, j} \big( h_{ij} \otimes h_i l_{ij (1)} \otimes l_i l_{ij (2)} \big) \otimes_H d_{ij} ~ \text{ as an element of } H^{\otimes 3} \otimes_H L.
\end{align}

\begin{definition}\label{defn-lieh}
    A {\em Lie $H$-pseudoalgebra} (or simply a {\em Lie pseudoalgebra} if $H$ is clear from the context) is a left $H$-module $L$ equipped with a $H^{\otimes 2}$-linear map $[\cdot * \cdot]_L : L \otimes L \rightarrow H^{\otimes 2} \otimes_H L$, called the pseudobracket, satisfying the following properties:

    - (skew-symmetry) $[y * x ]_L = - (\sigma_{12} \otimes_H 1) [x * y]_L$, for all $x, y \in L$,

    - (Jacobi identity) for any $x, y, z \in L$,
    \begin{align}
        [[x * y]_L * z]_L  =  [x * [y * z]_L ]_L - [ y * [x * z]_L]_L.
    \end{align}
    We denote a Lie $H$-pseudoalgebra as above by the pair $(L, [\cdot * \cdot]_L)$ or simply by $L$.
\end{definition}

\begin{remark}
    (i) When $H = {\bf k}$ is the base field, a Lie $H$-pseudoalgebra is simply a Lie algebra over ${\bf k}$.

    (ii) When $H = {\bf k}[\partial]$ is the algebra of polynomials in one variable $\partial$, a Lie $H$-pseudoalgebra is nothing but a Lie conformal algebra \cite{baka-kac-voro}.
\end{remark}

Let $(L, [\cdot * \cdot]_L)$ and $(L', [\cdot * \cdot]_{L'})$ be two Lie $H$-pseudoalgebras. A {\em homomorphism} of Lie $H$-pseudoalgebras from $L$ to $L'$ is a $H$-linear map $\Theta : L \rightarrow L'$ that preserve the pseudobrackets, i.e. $ [\Theta (x) * \Theta (y)]_{L'} = (\mathrm{id}_{H^{\otimes 2}} \otimes_H \Theta) [x * y]_L $, for all $x, y \in L.$ Further, it is said to be an isomorphism if $\Theta$ is an isomorphism of left $H$-modules.

Let $(L, [\cdot * \cdot]_L)$ be a Lie $H$-pseudoalgebra. A {\em representation} of $(L, [\cdot * \cdot]_L)$ is a left $H$-module $M$ equipped with a $H^{\otimes 2}$-linear map $ \cdot * \cdot : L \otimes M \rightarrow H^{\otimes 2} \otimes_H M$, called the action map, that satisfies
\begin{align*}
    [x * y]_L * u = x * (y * u) - y * (x * u), \text{ for all } x, y \in L, u \in M.
\end{align*}
It follows that the Lie $H$-pseudoalgebra $L$ is a representation of itself with the action map given by the pseudobracket $[\cdot * \cdot]_L$. This is called the adjoint representation.

Let $(L, [\cdot * \cdot]_L)$ be a Lie $H$-pseudoalgebra and $M$ be a representation of it with the action map being given by $ \cdot * \cdot: L \otimes M \rightarrow H^{\otimes 2} \otimes_H M$. Then there is a cochain complex $\{ C^\bullet (L, M), \delta \}$, where the cochain groups are given by
\begin{align*}
     C^n (L, M) = \begin{cases}
        \mathbf{k} \otimes_H M & \text{ if } n=0,\\
        \{ \theta \in  \mathrm{Hom}_{H^{\otimes n}} (L^{\otimes n}, H^{\otimes n} \otimes_H M)~|~ \theta \text{ is skew-symmetric} \} & \text{ if } n \geq 1.
        \end{cases}
\end{align*}
The coboundary map $\delta :  C^n (L, M) \rightarrow  C^{n+1} (L, M)$ is given by
\begin{align*}
   ( \delta (1 \otimes_H u)) (x) = \sum_{i} \varepsilon (g_i) f_i \otimes_H u_i, ~~ \text{ if } x * u = \sum_i (f_i \otimes g_i) \otimes_H u_i
\end{align*}
and
\begin{align*}
    (\delta \theta) (x_1, \ldots, x_{n+1} ) =~& \sum_{i=1}^{n+1} (-1)^{i} (\sigma_{1 \rightarrow i} \otimes_H 1) ~ x_i * \theta (x_1, \ldots, \widehat{x_i} , \ldots, x_{n+1})\\
    +& \sum_{1 \leq i < j \leq n+1} (-1)^{i+j+1} (\sigma_{\substack{1 \rightarrow i \\
    2 \rightarrow j}} \otimes_H 1) ~ \theta ([x_i * x_j]_L, x_1, \ldots, \widehat{x_i}, \ldots, \widehat{x_j}, \ldots, x_{n+1}),
\end{align*}
for $1 \otimes_H u \in C^0 (L, M)$, $\theta \in C^{n \geq 1} (L, M)$ and $x, x_1, \ldots, x_{n+1} \in L$. Here $\sigma_{1 \rightarrow i}$ and $\sigma_{\substack{1 \rightarrow i \\
    2 \rightarrow j}}$ are respectively the permutations on $H^{\otimes n+1}$ given by
    \begin{align*}
       \sigma_{1 \rightarrow i}( h_i \otimes h_1 \otimes \cdots \otimes h_{i-1} \otimes h_{i+1} \otimes \cdots \otimes h_{n+1}) = h_1 \otimes \cdots \otimes h_{n+1},\\
       \sigma_{\substack{1 \rightarrow i \\
    2 \rightarrow j}} ( h_i \otimes h_j \otimes h_1 \otimes \widehat{h_i} \otimes \cdots \otimes \widehat{h_j} \otimes \cdots \otimes h_{n+1} ) = h_1 \otimes \cdots \otimes h_{n+1}.
    \end{align*}
    The cohomology groups of the cochain complex $\{ C^\bullet (L, M), \delta \}$ are called the {\em cohomology of $L$ with coefficients in the representation $M$}. We denote the corresponding cohomology groups by $H^\bullet (L, M).$

    \begin{remark}
        (i) When $H = {\bf k}$, the above-defined cohomology reduces to the Chevalley-Eilenberg cohomology of the Lie algebra $L$ with coefficients in the Lie algebra representation $M$. On the other hand, when $H = {\bf k} [\partial]$, it reduces to the cohomology of a Lie conformal algebra \cite{baka-kac-voro}.

        (ii) It has been observed in \cite{bakalov-andrea-kac} that the second cohomology group $H^2 (L, M)$ classifies the set of all equivalence classes of abelian extensions of the Lie $H$-pseudoalgebra $L$ by the representation $M$.
    \end{remark}

    Let $L$ be a left $H$-module. Then it has been observed in \cite{wu} that the (shifted) graded space $C^{\bullet + 1} (L,L) = \oplus_{n=0}^\infty C^{n+1} (L,L)$, where
    \begin{align*}
        C^n(L,L) = \{ \theta \in \mathrm{Hom}_{H^{\otimes n}} (L^{\otimes n}, H^{\otimes n} \otimes_H L) |~ \theta \text{ is skew-symmetric}\}
    \end{align*}
    inherits a graded Lie bracket. This bracket generalizes the classical Nijenhuis-Richardson bracket and the bracket is explicitly given by
    \begin{align*}
        &  \qquad \llbracket P, Q \rrbracket (x_1, \ldots, x_{m+n+1} ) = (i_P Q - (-1)^{mn} i_Q P)  (x_1, \ldots, x_{m+n+1} ), \text{ where }\\
        &(i_P Q) (x_1, \ldots, x_{m+n+1} ) \\
        & \qquad = \sum_{\sigma \in \mathrm{Sh}(m+1, n)} (-1)^\sigma (\sigma_{i \rightarrow \sigma (i)} \otimes_H 1) ~Q \big(  P (   x_{\sigma (1)}, \ldots, x_{\sigma (m+1)}  ), x_{\sigma (m+2)} , \ldots, x_{\sigma (m+n+1)}   \big),   
    \end{align*}
    for $P \in C^{m+1} (L,L)$ and $Q \in C^{n+1}(L,L)$. Here $\sigma_{i \rightarrow \sigma (i)}$ is the permutation on $H^{\otimes m+n+1}$ given by
\begin{align*}
    \sigma_{i \rightarrow \sigma (i)} (h_{\sigma (1)} \otimes \cdots \otimes h_{\sigma (m+n+1)}) = h_1 \otimes \cdots \otimes h_{m+n+1}.
\end{align*}
Further, it has been shown that there is a one-to-one correspondence between Lie $H$-pseudoalgebra structures on the left $H$-module $L$ and Maurer-Cartan elements of the graded Lie algebra $( C^{\bullet + 1} (L, L), \llbracket ~, ~ \rrbracket   ).$ With the above notation, the coboundary map $\delta : C^n (L,L) \rightarrow C^{n+1} (L,L)$  of a Lie $H$-pseudoalgebra $(L, [\cdot * \cdot]_L)$ with coefficients in the adjoint representation is given by
\begin{align*}
    \delta \theta = \llbracket \rho_L, \theta \rrbracket, \text{ for } \theta \in C^n (L,L),
\end{align*}
where $\rho_L \in C^2 (L,L)$, $\rho_L (x, y) := [x*y]_L$ is the Maurer-Cartan element corresponding to the Lie $H$-pseudoalgebra structure on $L.$

\section{Extensions of Lie {\em H}-pseudoalgebras and the non-abelian cohomology}\label{sec3}

In this section, we study non-abelian extensions of a given Lie $H$-pseudoalgebra by another Lie $H$-pseudoalgebra. We introduce the non-abelian cohomology group (of a Lie $H$-pseudoalgebra with values in another Lie $H$-pseudoalgebra) that classifies equivalence classes of non-abelian extensions.

\begin{definition}\label{non-ab-ext-defn}
     Let $(L, [\cdot * \cdot]_L)$ and $(M, [\cdot * \cdot]_M)$ be two Lie $H$-pseudoalgebras.

     (i) A {\em non-abelian extension} of $(L, [\cdot * \cdot]_L)$ by $(M, [\cdot * \cdot]_M)$ is a new Lie $H$-pseudoalgebra $(E, [\cdot * \cdot]_E)$ equipped with a short exact sequence of Lie $H$-pseudoalgebras
     \begin{align}\label{nab-seq}
     \xymatrix{
     0 \ar[r] & (M, [\cdot * \cdot]_M) \ar[r]^i & (E, [\cdot * \cdot]_E) \ar[r]^p & (L, [\cdot * \cdot]_L) \ar[r] & 0.
     }
     \end{align}
     We denote a non-abelian extension as above simply by $ (E, [\cdot * \cdot]_E)$ if the short exact sequence is clear from the context.

     (ii) Let  $(E, [\cdot * \cdot]_E)$ and $ (E', [\cdot * \cdot]_{E'})$ be two non-abelian extensions of $(L, [\cdot * \cdot]_L)$ by $(M, [\cdot * \cdot]_M)$. These two non-abelian extensions are said to be {\em equivalent} if there exists a morphism $\Theta: E \rightarrow E'$ of Lie $H$-pseudoalgebras making the following diagram commutative
     \[
     \xymatrix{
     0 \ar[r] & (M, [\cdot * \cdot]_M) \ar@{=}[d] \ar[r]^i & (E, [\cdot * \cdot]_E) \ar[d]^\Theta \ar[r]^p & (L, [\cdot * \cdot]_L) \ar@{=}[d] \ar[r] & 0\\
     0 \ar[r] & (M, [\cdot * \cdot]_M) \ar[r]_{i'} & (E', [\cdot * \cdot]_{E'}) \ar[r]_{p'} & (L, [\cdot * \cdot]_L) \ar[r] & 0.
     }
     \]
\end{definition}

We denote by $\mathrm{Ext}_{nab} (L, M)$ the set of all equivalence classes of non-abelian extensions of $(L, [\cdot * \cdot]_L)$ by $(M, [\cdot * \cdot]_M)$.

\medskip

Let $(E, [\cdot * \cdot]_E)$ be a non-abelian extension of  $(L, [\cdot * \cdot]_L)$ by $(M, [\cdot * \cdot]_M)$ as of (\ref{nab-seq}). An $H$-linear map $s: L \rightarrow E$ is said to be a {\em section} of $p$ if the map $s$ satisfies $p \circ s = \mathrm{id}_L$. Note that a section of $p$ always exists. For any section $s : L \rightarrow E$, we define maps 
\begin{align*}
\chi \in \mathrm{Hom}_{H^{\otimes 2}} (L \otimes L, H^{\otimes 2} \otimes_H M) \quad \text{ and } \quad \psi \in \mathrm{Hom}_{H^{\otimes 2}} (L \otimes M, H^{\otimes 2} \otimes_H M) ~~\text{ by }
\end{align*}
\begin{align}\label{chi-psi}
    \chi (x, y) := [s(x) * s(y)]_E - ( \mathrm{id}_{H^{\otimes 2}} \otimes_H s) [x * y]_L ~~~ \text{ and } ~~~
    \psi (x, u) := [s(x) * u]_E,
\end{align}
for all $x, y \in L$ and $u \in M$. Note that the map $\psi$ satisfies
\begin{align}\label{deri-iden}
    [\psi (x, u) * v]_M = \psi (x, [u * v]_M) - [u * \psi (x , v)]_M, \text{ for } x \in L \text{ and } u, v \in M.
\end{align}
Further, we have the following.
\begin{proposition}
    The maps $\chi$ and $\psi$ satisfy
    \begin{align}\label{first-iden}
        \psi (x, \psi (y, u) ) - \psi (y, \psi (x, u)) - \psi ([x * y]_L, u) = [\chi (x, y) * u]_M,
        \end{align}
        \begin{align}\label{second-iden}
        \psi (x, \chi (y, z)) - \psi (y, \chi (x,z)) + \psi (z, \chi (x, y)) + \chi (x, [y * z]_L) - \chi (y, [x * z]_L) + \chi (z, [x * y]_L) = 0,
    \end{align}
    for all $x, y, z \in L$ and $u \in M$.
\end{proposition}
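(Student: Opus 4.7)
The plan is to derive both identities by applying the Jacobi identity for $(E, [\cdot * \cdot]_E)$ to appropriately chosen triples built from $s(L)$ and $M$. A preliminary observation that I would use freely is that $i(M)$ is an ideal in $E$: for any $m \in M$ and $e \in E$, $p([i(m) * e]_E) = [p(i(m)) * p(e)]_L = 0$, hence $[i(m) * e]_E$ lies in $H^{\otimes 2} \otimes_H i(M) \cong H^{\otimes 2} \otimes_H M$. In particular, brackets of $\chi$-terms (which are $M$-valued) with anything in $E$ stay in the $M$-part under the splitting coming from $s$.

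For identity (\ref{first-iden}), I would apply the Jacobi identity in $E$ to the triple $s(x), s(y), u$:
\[
[[s(x) * s(y)]_E * u]_E = [s(x) * [s(y) * u]_E]_E - [s(y) * [s(x) * u]_E]_E.
\]
On the left, substitute $[s(x) * s(y)]_E = \chi(x,y) + (\mathrm{id}_{H^{\otimes 2}} \otimes_H s)[x * y]_L$ and split via $H^{\otimes 2}$-linearity: the $\chi$-piece produces $[\chi(x,y) * u]_M$ by the ideal property, and the other piece produces $\psi([x*y]_L, u)$ by the definition of $\psi$. On the right, $[s(y) * u]_E = \psi(y,u)$ and $[s(x) * u]_E = \psi(x,u)$ both lie in $H^{\otimes 2} \otimes_H M$, so the outer brackets evaluate to $\psi(x, \psi(y,u))$ and $\psi(y, \psi(x,u))$ respectively. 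Rearranging yields exactly (\ref{first-iden}).

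For identity (\ref{second-iden}), I would apply the Jacobi identity in $E$ to the triple $s(x), s(y), s(z)$. The section $s$ provides an $H$-module splitting $E \cong s(L) \oplus M$ and hence a decomposition $H^{\otimes 3} \otimes_H E \cong (H^{\otimes 3} \otimes_H s(L)) \oplus (H^{\otimes 3} \otimes_H M)$, with respect to which $[s(\cdot) * s(\cdot)]_E$ decomposes into its $s(L)$-part $(\mathrm{id}_{H^{\otimes 2}} \otimes_H s)[\cdot * \cdot]_L$ and its $M$-part $\chi(\cdot, \cdot)$. Expanding each iterated bracket of the Jacobi relation by formulas (\ref{xy-z}) and (\ref{x-yz}) and substituting the above decomposition at every occurrence, the $s(L)$-component of the resulting equality is precisely the Jacobi identity of $L$ at $(x,y,z)$, which holds automatically. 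The $M$-component collects two types of ``mixed'' contributions: those where an $s(L)$-factor is bracketed against a $\chi$-factor yield the terms $\psi(\cdot, \chi(\cdot, \cdot))$, and those where $[\cdot * \cdot]_L$ is fed through one further application of the decomposition yield the terms $\chi(\cdot, [\cdot * \cdot]_L)$. Matching these against the cyclic expression on the left of (\ref{second-iden}) completes the proof.

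The main technical difficulty I expect is the careful bookkeeping of the permutations on $H^{\otimes 3}$ appearing in formulas (\ref{xy-z}) and (\ref{x-yz}), together with consistent use of the skew-symmetry $[y * x]_E = -(\sigma_{12} \otimes_H 1)[x * y]_E$ to rewrite expressions like $[\chi(x,y) * s(z)]_E$ in terms of $\psi(z, \chi(x,y))$ with the correct twist, so that the three cyclic contributions assemble with the signs indicated in (\ref{second-iden}). Once this tensor bookkeeping is settled, both identities reduce to a purely formal matching of terms on the two sides of the Jacobi equality in $E$.
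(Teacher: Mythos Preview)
Your proposal is correct and follows essentially the same approach as the paper: both identities are obtained by instantiating the Jacobi identity of $E$ on the triples $(s(x),s(y),u)$ and $(s(x),s(y),s(z))$, then unpacking via the definitions of $\chi$ and $\psi$. The only cosmetic difference is that for (\ref{second-iden}) the paper expands the six summands directly and cancels using the Jacobi identities of $E$ and $L$, whereas you phrase the same computation as projecting the Jacobi relation in $E$ onto the $M$-component under the splitting $E \cong s(L) \oplus M$; these are the same argument read in opposite directions.
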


\begin{proof}
    For all $x, y \in L$ and $u \in M$, we have
    \begin{align*}
        &\psi (x, \psi (y, u)) - \psi (y, \psi (x, u)) - \psi ([x * y]_L, u) \\
        &= [s(x) * [s(y) * u]_E]_E - [s(y) * [s(x) * u]_E]_E - [   (\mathrm{id}_{H^{\otimes 2}} \otimes_H s) [x * y]_L * u] \\
        &= [[ s(x) * s(y)]_E * u] -  [   (\mathrm{id}_{H^{\otimes 2}} \otimes_H s) [x * y]_L * u] = [\chi (x, y) * u]_E.
    \end{align*}
    This proves the identity (\ref{first-iden}). Moreover, for any $x, y, z \in L$, we have
    \begin{align}\label{first}
        & \psi (x, \chi (y, z)) - \psi (y, \chi (x,z)) + \psi (z, \chi (x, y)) \nonumber\\
        & = [s(x) * [s(y) * s(z)]_E]_E - [s(x) * (\mathrm{id}_{H^{\otimes 2}} \otimes_H s) [y*z]_L]_E -  [s(y) * [s(x) * s(z)]_E]_E \\
        & ~~ + [s(y) * (\mathrm{id}_{H^{\otimes 2}} \otimes_H s) [x*z]_L]_E - [[s(x) * s(y)]_E * s(z)]_E - [s(z) * (\mathrm{id}_{H^{\otimes 2}} \otimes_H s) [x*y]_L]_E \nonumber
    \end{align}
    and
    \begin{align}\label{second}
         &\chi (x, [y * z]_L) - \chi (y, [x * z]_L) + \chi (z, [x * y]_L) \nonumber \\
         &= [s(x) * (\mathrm{id}_{H^{\otimes 2}} \otimes_H s)[y * z]_L]_E - (\mathrm{id}_{H^{\otimes 3}} \otimes_H s) [x*[y*z]_L]_L - [s(y) * (\mathrm{id}_{H^{\otimes 2}} \otimes_H s)[x * z]_L]_E \\
         & ~~ + (\mathrm{id}_{H^{\otimes 3}} \otimes_H s) [y*[x*z]_L]_L + [s(z) * (\mathrm{id}_{H^{\otimes 2}} \otimes_H s)[x*y]_L]_E + (\mathrm{id}_{H^{\otimes 3}} \otimes_H s) [[x*y]_L*z]_L. \nonumber
    \end{align}
    By summing up the expressions of (\ref{first}) and (\ref{second}), we obtain the desired identity (\ref{second-iden}).
\end{proof}

Note that the maps $\chi$ and $\psi$ both depend on the section $s$. Let $s'$ be any other section of $p$. We define a map $\varphi : L \rightarrow E$ by $\varphi (x) := s(x) - s'(x)$, for $x \in L$. Then we have $\mathrm{im} (\varphi) \subset \mathrm{ker} (p) = \mathrm{im} (i)$. Therefore, $\varphi$ can be realized as a map $\varphi : L \rightarrow M$. Let $\chi'$ and $\psi'$ be the maps induced by the section $s'$ as of (\ref{chi-psi}). Then we have
\begin{align*}
    \psi (x, u) - \psi' (x, u) = [(s(x) - s'(x)) * u]_E = [\varphi (x) * u]_M,
    \end{align*}
    \begin{align*}
    \chi (x, y) - \chi'(x, y) =~& [s(x) * s(y)]_E - (\mathrm{id}_{H^{\otimes 2}} \otimes_H s) [x*y]_L -  [s'(x) * s'(y)]_E + (\mathrm{id}_{H^{\otimes 2}} \otimes_H s') [x*y]_L \\
    =~& [ (\varphi + s')(x) * (\varphi + s')(y)]_E - (\mathrm{id}_{H^{\otimes 2}} \otimes_H (\varphi + s')) [x*y]_L \\
    &-  [s'(x) * s'(y)]_E + (\mathrm{id}_{H^{\otimes 2}} \otimes_H s') [x*y]_L \\
    =~& \psi' (x, \varphi(y)) - (\sigma_{12} \otimes_H 1) \psi' (y, \varphi (x)) - (\mathrm{id}_{H^{\otimes 2}} \otimes \varphi) [x * y]_L + [\varphi (x) * \varphi (y)]_M.
\end{align*}
The above discussions motivate us to consider the following definition.

\begin{definition}
    Let $(L, [\cdot * \cdot]_L)$ and $(M, [\cdot * \cdot]_M)$ be two Lie $H$-pseudoalgebras.

    (i) A {\em non-abelian $2$-cocycle} of $(L, [\cdot * \cdot]_L)$ with values in $(M, [\cdot * \cdot]_M)$ is a pair $(\chi, \psi)$ consisting of maps $\chi \in \mathrm{Hom}_{H^{\otimes 2}} (L \otimes L, H^{\otimes 2} \otimes_H M)$ and $\psi \in \mathrm{Hom}_{H^{\otimes 2}} (L \otimes M, H^{\otimes 2} \otimes_H M)$ in which $\chi$ is skew-symmetric and satisfying the conditions (\ref{deri-iden}), (\ref{first-iden}) and (\ref{second-iden}).

    (ii) Let $(\chi, \psi)$ and $(\chi', \psi')$ be two non-abelian $2$-cocycles of $(L, [\cdot * \cdot]_L)$ with values in $(M, [\cdot * \cdot]_M)$. These two cocycles are said to be {\em equivalent} if there exists a map $\varphi \in \mathrm{Hom}_H (L, H \otimes_H M) = \mathrm{Hom}_H (L, M)$ such that for all $x, y \in L$ and $u \in M$,
    \begin{align}
        \psi (x, u) - \psi' (x, u) =~& [\varphi (x) * u]_M,\label{equiv1}\\
        \chi (x, y) - \chi' (x, y) =~& \psi' (x, \varphi(y)) - (\sigma_{12} \otimes_H 1) \psi' (y, \varphi (x)) - (\mathrm{id}_{H^{\otimes 2}} \otimes \varphi) [x * y]_L + [\varphi (x) * \varphi (y)]_M. \label{equiv2}
    \end{align}
\end{definition}

\medskip

We denote by $H^2_{nab} (L, M)$ the set of all equivalence classes of non-abelian $2$-cocycles of $(L, [\cdot * \cdot]_L)$ with values in $(M, [\cdot * \cdot]_M)$. We call $H^2_{nab} (L, M)$ as the {\em non-abelian cohomology group} of $(L, [\cdot * \cdot]_L)$ with values in $(M, [\cdot * \cdot]_M)$. In the following result, we classify the set $\mathrm{Ext}_{nab} (L, M)$ of all equivalence classes of non-abelian extensions by the non-abelian cohomology group $H^2_{nab} (L, M)$. More precisely, we have the following.

\begin{theorem}\label{nab-theorem}
    Let $(L, [\cdot * \cdot]_L)$ and $(M, [\cdot * \cdot]_M)$ be two Lie $H$-pseudoalgebras. Then there is a bijection
    \begin{align*}
        \mathrm{Ext}_{nab} (L, M) \cong H^2_{nab} (L, M).
    \end{align*}
\end{theorem}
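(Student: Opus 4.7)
\smallskip

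The plan is to establish the bijection by explicitly constructing maps in both directions and showing they are mutually inverse, with the main work being a careful check that the non-abelian cocycle conditions are precisely what is needed to build a Lie $H$-pseudoalgebra structure on $L \oplus M$.

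\smallskip

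First, I would define the forward map $\Phi : \mathrm{Ext}_{nab}(L,M) \to H^2_{nab}(L,M)$ as follows. Given a non-abelian extension (\ref{nab-seq}), choose any $H$-linear section $s$ of $p$ and associate to it the pair $(\chi, \psi)$ defined in (\ref{chi-psi}). By the identities (\ref{deri-iden}), (\ref{first-iden}), (\ref{second-iden}) established in the proposition preceding the theorem, together with the skew-symmetry of $\chi$ which follows from the skew-symmetry of $[\cdot * \cdot]_E$, the pair $(\chi,\psi)$ is a non-abelian $2$-cocycle. The computation displayed right after that proposition shows that changing the section $s$ to $s'$ produces an equivalent cocycle via the $H$-linear map $\varphi = s - s' : L \to M$. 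Finally, if $\Theta : E \to E'$ is an equivalence of extensions and $s$ is a section for $p$, then $s' := \Theta \circ s$ is a section for $p'$ producing exactly the same pair $(\chi, \psi)$ on the $M$-side since $\Theta$ is the identity on $M$ and the bracket is preserved. Hence $\Phi$ is well-defined on equivalence classes.

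\smallskip

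Next, I would define the inverse map $\Psi : H^2_{nab}(L,M) \to \mathrm{Ext}_{nab}(L,M)$. Given a non-abelian $2$-cocycle $(\chi, \psi)$, set $E := L \oplus M$ as a left $H$-module, and define a pseudobracket on $E$ by the formula
\begin{align*}
[(x + u) * (y + v)]_E \ :=\ (\mathrm{id}_{H^{\otimes 2}} \otimes_H (\iota_L)) [x*y]_L \,+\, \chi(x,y) \,+\, \psi(x,v) \,-\, (\sigma_{12} \otimes_H 1)\psi(y,u) \,+\, [u*v]_M,
\end{align*}
where $\iota_L : L \hookrightarrow E$ is the inclusion. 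The $H^{\otimes 2}$-linearity of $[\cdot * \cdot]_E$ follows from that of its five constituents; skew-symmetry follows from the skew-symmetries of $\chi$, $[\cdot * \cdot]_L$, $[\cdot * \cdot]_M$ and the definition of the $\psi$-terms. For the Jacobi identity on $E$ one expands $[[(x+u)*(y+v)]_E * (z+w)]_E$ and its cyclic counterparts, and collects terms according to whether they lie in the $L$-part or the $M$-part. The $L$-part reduces to the Jacobi identity for $L$. The $M$-part naturally decomposes into (a) terms involving three $M$-entries, controlled by the Jacobi identity for $M$; (b) terms involving two $M$-entries and one $L$-entry, controlled by (\ref{deri-iden}); (c) terms involving one $M$-entry and two $L$-entries, controlled by (\ref{first-iden}); and (d) terms involving three $L$-entries, controlled by (\ref{second-iden}). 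With the obvious inclusion $i = \iota_M$ and projection $p = \pi_L$ one obtains a non-abelian extension, giving $\Psi[(\chi,\psi)]$. To see $\Psi$ is well-defined on equivalence classes, given an equivalence $\varphi : L \to M$ between $(\chi, \psi)$ and $(\chi', \psi')$ as in (\ref{equiv1}), (\ref{equiv2}), one checks that the $H$-linear map $\Theta : L \oplus M \to L \oplus M$ defined by $\Theta(x + u) := x + (u + \varphi(x))$ is an isomorphism of Lie $H$-pseudoalgebras between the two resulting extensions and fits into the commutative diagram of Definition \ref{non-ab-ext-defn}(ii).

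\smallskip

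Finally, I would verify $\Phi \circ \Psi = \mathrm{id}$ and $\Psi \circ \Phi = \mathrm{id}$. For the first, the obvious section $s(x) = x + 0$ of $\Psi[(\chi,\psi)]$ reproduces exactly the pair $(\chi,\psi)$ under (\ref{chi-psi}). For the second, given an extension $(E, [\cdot * \cdot]_E)$ and a section $s$, the $H$-linear isomorphism $L \oplus M \to E$, $(x,u) \mapsto s(x) + i(u)$, transports the pseudobracket built from the induced $(\chi,\psi)$ to the original pseudobracket on $E$, and respects the inclusion of $M$ and the projection onto $L$, hence yields an equivalence of extensions. The main obstacle, as anticipated, is the bookkeeping in the Jacobi identity check for $\Psi$: one must carefully track the permutations $\sigma_{1 \to i}$ and $\sigma_{\substack{1\to i\\2\to j}}$ acting on $H^{\otimes 3}$-factors so that each group of terms matches precisely the corresponding cocycle identity, with the signs from skew-symmetry of $\psi$ under $\sigma_{12}$ accounted for correctly.
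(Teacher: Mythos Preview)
Your proposal is correct and follows essentially the same approach as the paper: define the forward map via a section and the pair $(\chi,\psi)$ of (\ref{chi-psi}), define the inverse by equipping $L\oplus M$ with the pseudobracket (\ref{direct-pseudo}), check well-definedness on equivalence classes using the map $\Theta(x,u)=(x,\varphi(x)+u)$, and observe the two constructions are mutually inverse. Your outline is in fact slightly more explicit than the paper's about the Jacobi verification and the mutual-inverse check, but the strategy and all the key constructions are identical.
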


\begin{proof}
    Let $(E, [\cdot * \cdot]_E)$ be a non-abelian extension of $L$ by $M$ as of (\ref{nab-seq}). For any section $s$, let $(\chi, \psi)$ be the corresponding non-abelian $2$-cocycle of $L$ with values in $M$. Suppose $(E', [\cdot * \cdot]_{E'})$ is another non-abelian extension of $L$ by $M$ equivalent to $(E, [\cdot * \cdot]_E)$. See Definition \ref{non-ab-ext-defn} (ii). Then we have $p'\circ (\Theta \circ s) = (p'\circ \Theta) \circ s = p \circ s = \mathrm{id}_L$ which shows that $s'= \Theta \circ s : L \rightarrow E'$ is a section of the map $p'$. If $(\chi', \psi')$ is the non-abelian $2$-cocycle corresponding to the extension $(E', [\cdot * \cdot]_{E'})$ with the section $s'$, then we have
    \begin{align*}
        \chi' (x, y) =~& [s'(x) * s'(y)]_{E'} - (\mathrm{id}_{H^{\otimes 2}} \otimes_H s') [x*y]_L \\
        =~& [ (\Theta \circ s)(x) * (\Theta \circ s)(y)]_{E'} - (\mathrm{id}_{H^{\otimes 2}} \otimes_H \Theta \circ s) [x * y]_L
        \\
       =~&  (\mathrm{id}_{H^{\otimes 2}} \otimes_H \Theta ) [s(x) * s(y)]_E - (\mathrm{id}_{H^{\otimes 2}} \otimes_H \Theta) (\mathrm{id}_{H^{\otimes 2}} \otimes_H s) [x*y]_L  \\ 
         =~& (\mathrm{id}_{H^{\otimes 2}} \otimes_H \Theta ) \chi (x, y) = \chi (x, y) \quad (\text{as } \Theta|_M = \mathrm{id}_M ), \text{ and } \\
        \psi' (x, u) =~& [s'(x) * u]_{E'} = [(\Theta \circ s)(x) * \Theta (u)]_{E'} ~~~ (\text{as } \Theta (u) = u) \\
        =~& (\mathrm{id}_{H^{\otimes 2}} \otimes_H \Theta) [s(x) * u]_{E} \\
        =~&  (\mathrm{id}_{H^{\otimes 2}} \otimes_H \Theta) \psi (x, u) = \psi (x, u)  \quad (\text{as } \Theta|_M = \mathrm{id}_M ).
    \end{align*}
    for $x \in L$, $u \in M$. Hence the non-abelian $2$-cocycles $(\chi, \psi)$ and $(\chi', \psi')$ are the same. This shows that there is a well-defined map $\Upsilon: \mathrm{Ext}_{nab} (L, M) \rightarrow H^2_{nab} (L, M)$ which assigns an equivalence class of extensions to the equivalence class of the corresponding non-abelian $2$-cocycles.

    \medskip

    Conversely, let $(\chi, \psi)$ be a non-abelian $2$-cocycle. Consider the left $H$-module $L \oplus M$ together with the pseudobracket
    \begin{align}\label{direct-pseudo}
        [(x, u) * (y, v)]_{(\chi, \psi)} := \big(  [x* y]_L, \psi (x, v) - (\sigma_{12} \otimes_H 1) \psi (y, u) + \chi (x, y) + [u * v]_M   \big),
    \end{align}
    for $(x, u), (y, v) \in L \oplus M$. The pseudobracket is obviously $H^{\otimes 2}$-linear and skew-symmetric. Further, it follows from the properties of $\chi$ and $\psi$ that the pseudobracket (\ref{direct-pseudo}) also satisfies the Jacobi identity. Hence $(L \oplus M, [\cdot * \cdot]_{(\chi,\psi)})$ is a Lie $H$-pseudoalgebra, denoted simply by $L \oplus_{(\chi, \psi)} M$. Moreover, the short exact sequence $0 \rightarrow M \xrightarrow{i} L \oplus_{(\chi, \psi)} M \xrightarrow{p} L \rightarrow 0$ becomes a non-abelian extension of $L$ by $M$, where $i (u) = (0, u)$ $p (x, u)= x$, for $u \in M$ and $(x, u) \in L \oplus M$.

    Next, let $(\chi, \psi)$ and $(\chi', \psi')$ be two equivalent non-abelian $2$-cocycles. In other words, there exists a map $\varphi \in \mathrm{Hom}_H (L, H \otimes_H M) = \mathrm{Hom}_H (L, M)$ such that (\ref{equiv1}) and (\ref{equiv2}) holds. Let $L \oplus_{(\chi, \psi)} M$ and $L \oplus_{(\chi', \psi')} M$  be the Lie $H$-pseudoalgebras induced by the $2$-cocycles $(\chi, \psi)$ and $(\chi', \psi')$, respectively. We define a map 
    \begin{align*}
    \Theta : L \oplus_{(\chi, \psi)} M \rightarrow L \oplus_{(\chi', \psi')} M ~~ \text{ by } ~~ \Theta ((x, u)) = (x, \varphi (x) + u), \text{ for } (x, u) \in L \oplus M.
    \end{align*}
    Then we have
    \begin{align*}
      &( \mathrm{id}_{H^{\otimes 2}} \otimes_H  \Theta) [(x, u) * (y, v)]_{(\chi, \psi)} \\
     & = ( \mathrm{id}_{H^{\otimes 2}} \otimes_H  \Theta)  ([x * y]_L , \psi (x, v) - (\sigma_{12} \otimes_H 1) \psi (y, u) + \chi (x, y) + [u * v]_M)
      \\ 
     & =  ( \mathrm{id}_{H^{\otimes 2}} \otimes_H  \Theta)  \big( [x * y]_L , \psi' (x, v) + [\varphi (x) * v]_M -  (\sigma_{12} \otimes_H 1)  \psi' (y, u) - (\sigma_{12} \otimes_H 1)  [\varphi (y) * u]_M \\
     & \quad + \chi' (x, y) + \psi' (x, \varphi(y)) - \psi' (y, \varphi (x)) - (\mathrm{id}_{H^{\otimes 2}} \otimes \varphi) [x * y]_L + [\varphi (x) * \varphi (y)]_M + [u * v]_M  \big)\\
     & = \big(  [x* y]_L, \psi' (x , \varphi (y) + v) -   (\sigma_{12} \otimes_H 1) \psi' (y, \varphi (x) + v) + \chi' (x, y) + [   (\varphi (x) + u) * (\varphi (y) + v)]_M \big) \\
     & = [   (x, \varphi (x) + u) * (y, \varphi (y) + v)]_{(\chi', \psi')} \\
      &  = [\Theta (x, u) * \Theta (y, v)]_{(\chi', \psi')}.
    \end{align*}
    Hence the map $\Theta$ defines an equivalence of non-abelian extensions between $L \oplus_{(\chi, \psi)} M $ and $L \oplus_{(\chi', \psi')} M $. Therefore, we obtain a well-defined map $\Omega : H^2_{nab} (L, M) \rightarrow \mathrm{Ext}_{nab} (L, M)$.

    Finally, it is straightforward to verify that the maps $\Upsilon$ and $\Omega$ are inverses to each other. Hence we obtain a bijection between $\mathrm{Ext}_{nab} (L, M)$ and $ H^2_{nab} (L, M)$.
\end{proof}

Let $(L, [\cdot * \cdot]_L)$ be a Lie $H$-pseudoalgebra and $M$ be a left $H$-module (not necessarily a Lie $H$-pseudoalgebra nor any representation of $L$). We realize $M$ as a Lie $H$-pseudoalgebra with the trivial pseudobracket $[\cdot * \cdot]_M = 0$. An {\em abelian extension} of the Lie $H$-pseudoalgebra $(L, [\cdot * \cdot]_L)$  by the left $H$-module $M$ is a short exact sequence of Lie $H$-pseudoalgebras
\begin{align*}
    \xymatrix{
    0 \ar[r] & (M , [\cdot * \cdot]_M  = 0) \ar[r]
^i   &   (E , [\cdot * \cdot]_E) \ar[r]^p & (L, [\cdot * \cdot]_L) \ar[r] & 0.
}
\end{align*}
One can also define equivalences between abelian extensions of $(L, [\cdot * \cdot]_L)$  by the left $H$-module $M$.

Let $s: L \rightarrow E$ be any section of the map $p$. Then there is a representation of the Lie $H$-pseudoalgebra $(L, [\cdot * \cdot]_L)$ on the left $H$-module $M$ with the action map given by $x * u := \psi (x, u) = [s(x) * u]_E$, for $x \in L$ and $u \in M$. This is indeed a representation that follows from (\ref{first-iden}). Further, this representation does not depend on the choice of the section $s$. 

Next, let $M$ be a given representation of the Lie $H$-pseudoalgebra $(L, [\cdot * \cdot]_L)$. We denote by $\mathrm{Ext}_{ab} (L, M)$ the set of all equivalence classes of abelian extensions of $(L, [\cdot * \cdot]_L)$ by the left $H$-module $M$ so that the induced representation on $M$ coincides with the given one. With the above definition and from our Theorem \ref{nab-theorem}, we recover the classification result about abelian extensions \cite{bakalov-andrea-kac}.

\begin{theorem}
    Let $(L, [\cdot * \cdot]_L)$ be a Lie $H$-pseudoalgebra and $M$ be a representation of it. Then
    \begin{align*}
        \mathrm{Ext}_{ab} (L, M) \cong H^2 (L, M).
    \end{align*}
\end{theorem}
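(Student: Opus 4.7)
The plan is to deduce this theorem as a direct specialization of Theorem \ref{nab-theorem} to the situation where the coefficient Lie $H$-pseudoalgebra $M$ carries the trivial pseudobracket $[\cdot * \cdot]_M = 0$. First I would single out, among all non-abelian $2$-cocycles $(\chi, \psi)$ of $L$ with values in $(M, 0)$, those for which $\psi$ agrees with the prescribed action $\cdot * \cdot : L \otimes M \to H^{\otimes 2} \otimes_H M$. Observe that with $[\cdot * \cdot]_M = 0$ condition (\ref{deri-iden}) is vacuous, and condition (\ref{first-iden}) reduces exactly to the representation axiom for $\psi$; hence the constraint $\psi = \cdot * \cdot$ is natural and well-posed. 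One then checks that abelian extensions inducing the given representation are in bijection with such cocycles, essentially because a section $s$ of an abelian extension produces $\psi(x,u) = [s(x) * u]_E$ which, by the paragraph before Theorem \ref{nab-theorem}, does not depend on $s$ and equals the given action.

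Next I would specialize the $2$-cocycle equation (\ref{second-iden}). With $[u*v]_M = 0$ it reads
\begin{align*}
\psi(x, \chi(y, z)) - \psi(y, \chi(x,z)) + \psi(z, \chi(x, y)) + \chi(x, [y*z]_L) - \chi(y, [x*z]_L) + \chi(z, [x*y]_L) = 0,
\end{align*}
and after inserting the appropriate permutations $\sigma_{1\to i}$ and $\sigma_{1\to i,\, 2\to j}$ needed to regard all terms as elements of $H^{\otimes 3} \otimes_H M$, this is exactly the Chevalley--Eilenberg-type condition $\delta \chi = 0$ for $\chi \in C^2(L,M)$ as given in Section \ref{sec2}. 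Likewise, the equivalence condition (\ref{equiv1}) reduces to $\psi = \psi'$ (which is automatic once both cocycles induce the fixed representation), while condition (\ref{equiv2}) becomes
\begin{align*}
\chi(x,y) - \chi'(x,y) = \psi(x, \varphi(y)) - (\sigma_{12} \otimes_H 1)\psi(y, \varphi(x)) - (\mathrm{id}_{H^{\otimes 2}} \otimes \varphi)[x*y]_L,
\end{align*}
which is precisely $(\delta \varphi)(x,y)$ for $\varphi \in C^1(L,M) = \mathrm{Hom}_H(L,M)$. Thus two such non-abelian $2$-cocycles are equivalent in the sense of Section \ref{sec3} if and only if the corresponding $\chi$'s differ by a $1$-coboundary in $\{C^\bullet(L,M), \delta\}$.

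Combining these two reductions, the bijection $\mathrm{Ext}_{nab}(L,M) \cong H^2_{nab}(L,M)$ from Theorem \ref{nab-theorem} restricts to the desired bijection $\mathrm{Ext}_{ab}(L,M) \cong H^2(L,M)$. The only real obstacle is the routine bookkeeping with the permutation maps $\sigma_{1 \to i}$ and $\sigma_{1 \to i,\, 2\to j}$: the formulas in Section \ref{sec3} were written in a compressed notation suppressing these permutations, and one must make them explicit to recognize the right-hand sides as the coboundary formulas of Section \ref{sec2}. Once this matching of conventions is carried out, nothing further is needed, since both sides have already been interpreted (by Theorem \ref{nab-theorem} and by the construction $L \oplus_{(\chi,\psi)} M$) in mutually inverse ways.
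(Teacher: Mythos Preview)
Your proposal is correct and follows essentially the same route as the paper's own proof: both specialize Theorem \ref{nab-theorem} by observing that with $[\cdot * \cdot]_M = 0$ condition (\ref{deri-iden}) becomes vacuous, (\ref{first-iden}) pins $\psi$ down as the prescribed action, (\ref{second-iden}) becomes $\delta\chi = 0$, and the equivalence relation (\ref{equiv1})--(\ref{equiv2}) collapses to ``$\chi$ and $\chi'$ are cohomologous''. The paper phrases the argument by rebuilding the maps $\Upsilon$ and $\Omega$ explicitly in the abelian setting rather than literally restricting the non-abelian bijection, but the content is the same; your remark about the suppressed permutation operators is a fair caveat and not a gap.
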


\begin{proof}
    Let $(E, [\cdot * \cdot]_E)$ be an abelian extension of $(L, [\cdot * \cdot]_L)$ by $M$ representing an element of $\mathrm{Ext}_{ab} (L, M)$. Since $[\cdot * \cdot]_M = 0$, the condition (\ref{deri-iden}) is redundant. On the other hand, it follows from (\ref{first-iden}) that $\psi$ defines a representation of $(L, [\cdot * \cdot]_L)$ on the left $H$-module $M$ which is the prescribed one. Finally, the condition (\ref{second-iden}) yields that $\chi$ is a $2$-cocycle on the cochain complex of the Lie $H$-pseudoalgebra $(L, [\cdot * \cdot]_L)$ with coefficients in the representation $M$. Moreover, if $(E, [\cdot * \cdot]_E)$ and $(E', [\cdot * \cdot]_{E'})$ are two equivalent abelian extensions, then the corresponding $2$-cocycles are cohomologous. Hence, in this case, we obtain a map $\Upsilon : \mathrm{Ext}_{ab} (L, M) \rightarrow H^2(L, M).$

    Conversely, if $\chi$ is a $2$-cocycle (i.e. $\chi \in C^2 (L, M)$ with $\delta (\chi) = 0$), then we can define  a Lie $H$-pseudoalgebra structure on the left $H$-module $L \oplus M$ (denote the structure by $L \oplus_\chi M$) similar to (\ref{direct-pseudo}) with the fact that $[ \cdot * \cdot ]_M = 0$. This Lie $H$-pseudoalgebra makes $0 \rightarrow M \rightarrow L \oplus_\chi M \rightarrow L \rightarrow 0$ into an abelian extension of the Lie $H$-pseudoalgebra $L$ by $M$. Finally, if $\chi'$ is another $2$-cocycle cohomologous to $\chi$ (i.e. $\delta (\chi') = 0$ and $\chi - \chi' = \delta (\varphi)$ for some $\varphi \in C^1 (L, M)$) then the abelian extensions $L \oplus_\chi M$ and $L \oplus_{\chi'} M$ are equivalent. Hence one gets a map $\Omega : H^2 (L, M) \rightarrow \mathrm{Ext}_{ab} (L, M)$. Finally, the maps $\Upsilon$ and $\Omega$ are inverses to each other. This completes the proof.
\end{proof}

\section{Non-abelian cohomology as Deligne groupoid}\label{sec4}
In this section, we show that the non-abelian cohomology group $H^2_{nab} (L, M)$ of a Lie $H$-pseudoalgebra $L$ with values in another Lie $H$-pseudoalgebra $M$ can be seen as the Deligne groupoid of a suitable differential graded Lie algebra $\mathfrak{g}$. More precisely, we show that the set of all non-abelian cocycles has a bijection with the set of all Maurer-Cartan elements of $\mathfrak{g}$. Further, the notion of equivalence between non-abelian cocycles coincides with the equivalence between the corresponding Maurer-Cartan elements. In the end, we also discuss abelian extensions in terms of the tangent cohomology complex.


First recall that a {\em differential graded Lie algebra} (in short {dgLa}) is a triple $(\mathfrak{g}, \llbracket~,~\rrbracket, d)$ in which $\mathfrak{g} = \oplus_{i \in \mathbb{Z}} \mathfrak{g}^i$ is a graded vector space equipped with a degree $0$ bilinear bracket $ \llbracket ~,~ \rrbracket : \mathfrak{g} \times \mathfrak{g} \rightarrow \mathfrak{g}$ satisfying

- (graded skew-symmetry) $ \llbracket x,y \rrbracket = - (-1)^{|x| |y|} \llbracket y, x \rrbracket,$

- (graded Jacobi identity) $\llbracket x, \llbracket y, z \rrbracket \rrbracket = \llbracket \llbracket x, y \rrbracket, z \rrbracket + (-1)^{|x| |y|} \llbracket y, \llbracket x,z \rrbracket \rrbracket,$

\noindent and a degree $1$ linear map $d : \mathfrak{g} \rightarrow \mathfrak{g}$ satisfying $d^2 = 0$ and the following Leibniz rule
\begin{align*}
    d \llbracket x, y \rrbracket = \llbracket dx, y \rrbracket  + (-1)^{|x|} \llbracket x, dy \rrbracket, \text{for all homogeneous } x, y, z \in \mathfrak{g}.
\end{align*}

Let $(\mathfrak{g}, \llbracket ~,~ \rrbracket, d)$ be a differential graded Lie algebra. An element $\alpha \in \mathfrak{g}^1$ is said to be a {\em Maurer-Cartan element} of $(\mathfrak{g}, \llbracket ~,~\rrbracket, d)$ if the element $\alpha$ satisfies
\begin{align*}
    d \alpha + \frac{1}{2} \llbracket \alpha, \alpha \rrbracket = 0.
\end{align*}
We denote the set of all Maurer-Cartan elements of $(\mathfrak{g}, \llbracket ~,~\rrbracket , d)$ simply by the notation $MC (\mathfrak{g})$.

There is an equivalence relation on the set $MC (\mathfrak{g})$. First, recall that an element $\beta \in \mathfrak{g}^0$ is said to be {\em ad-nilpotent} if for any $x \in \mathfrak{g}$, there exists a natural number $n$ such that $(ad_\beta)^n (x) = 0$, where $ad_\beta = \llbracket \beta , - \rrbracket$. Note that, if $\beta$ is ad-nilpotent then one can make sense the operator $e^{ad_\beta}$. For an ad-nilpotent element $\beta$, we consider the element $g_\beta = - \sum_{n \in \mathbb{N} \cup \{ 0 \} } \frac{1}{(n+1)!} (ad_\beta)^n d \beta.$

Let $(\mathfrak{g}, \llbracket ~,~ \rrbracket, d)$ be a differential graded Lie algebra with abelian $\mathfrak{g}^0$. Let $\alpha, \alpha' \in MC (\mathfrak{g})$ be two Maurer-Cartan elements. They are said to be {\em equivalent} (and we write $\alpha \sim \alpha'$) if there exists an ad-nilpotent element $\beta$ such that
\begin{align*}
    \alpha' = e^{ad_\beta} \alpha + g_\beta.
\end{align*}
It has been observed in \cite[Lemma 1.3]{fregier} that $\sim$ defines an equivalence relation on $MC (\mathfrak{g})$.

\begin{definition}
    Let $(\mathfrak{g}, \llbracket ~,~ \rrbracket, d)$ be a differential graded Lie algebra with abelian $\mathfrak{g}^0$. The {\em Deligne groupoid} of $\mathfrak{g}$ is the groupoid whose set of objects is $MC (\mathfrak{g})$ and whose set of Homs is empty for non-equivalent objects and reduced to one element for equivalent objects.
\end{definition}

We denote the set of all connected components of the Deligne groupoid by $\mathcal{MC}(\mathfrak{g})$. Thus,
\begin{align*}
    \mathcal{MC} (\mathfrak{g}) := MC (\mathfrak{g}) / \sim.
\end{align*}

\medskip

Let $(L, [\cdot * \cdot]_L )$ and $(M, [\cdot * \cdot]_M )$ be two Lie $H$-pseudoalgebras. Consider the direct sum left $H$-module $L \oplus M$ and the associated graded Lie algebra
\begin{align*}
    C^{\bullet + 1} (L \oplus M, L \oplus M) := \big( \oplus_{n=0}^\infty C^{n+1} (L \oplus M, L \oplus M), \llbracket ~, ~ \rrbracket   \big)
\end{align*}
described in Section \ref{sec2}. Since $L$ and $M$ are both Lie $H$-pseudoalgebras, the direct sum $L \oplus M$ inherits a Lie $H$-pseudoalgebra structure. This Lie $H$-pseudoalgebra structure gives rise to a Maurer-Cartan element (namely $\rho_L + \rho_M$) on the above graded Lie algebra. Hence we obtain a differential 
\begin{align*}
d : C^n (L \oplus M, L \oplus M) \rightarrow  C^{n+1} (L \oplus M, L \oplus M), ~~d := \llbracket \rho_L + \rho_M , - \rrbracket
\end{align*}
which makes the triple $(C^{\bullet + 1} (L \oplus M, L \oplus M), \llbracket ~, ~ \rrbracket, d)$ into a differential graded Lie algebra.

On the other hand, observe that the left $H$-module $M$ can be equipped with a representation of the Lie $H$-pseudoalgebra $L \oplus M$ with the action map
\begin{align*}
    (x, u) * v := [u * v]_M, \text{ for } (x,u) \in L \oplus M \text{ and } v \in M.
\end{align*}
Hence we may consider the cochain complex $\{ C^\bullet (L \oplus M, M), \delta \}$ defining the cohomology of the Lie $H$-pseudoalgebra $L \oplus M$ with coefficients in the representation $M$. We define a subcomplex $C^\bullet_{>} (L \oplus M, M)$ of the complex $C^\bullet (L \oplus M, M)$ by
\begin{align*}
    C^\bullet_{>} (L \oplus M, M) \cong \oplus_{(m,n) \in \mathbb{N} \times (\mathbb{N} \cup \{ 0 \})} C^{m,n},
\end{align*}
where $C^{m,n}$ is given by restricting elements of $C^{m+n} (L \oplus M, M)$ to $L^{\otimes m} \otimes M^{\otimes n}$.

For any $f \in C^{m,n}$, it is easy to see from the definition of the bracket $\llbracket ~, ~ \rrbracket$ that $\llbracket  \rho_L , f  \rrbracket \in C^{m+1,n}$ and $\llbracket \rho_M  , f  \rrbracket \in C^{m,n+1}$. Hence $d(f) = \llbracket  \rho_L  + \rho_M , f  \rrbracket = \llbracket \rho_L  ,   f \rrbracket + \llbracket \rho_M  , f  \rrbracket$ which shows that the map $d$ satisfies $d ( C^\bullet_{>} (L \oplus M, M)) \subset C^{\bullet + 1}_{>} (L \oplus M, M)$. On the other hand, for $f \in C^{m,n}$ and $g \in C^{p, q}$, we have $i_f g, i_g f \in C^{m+p, n+q-1}$. Hence
\begin{align*}
    \llbracket f, g \rrbracket = i_f g - (-)^{(m+n-1) (p+q -1)} i_g f ~\in C^{m+p,n+q-1}
\end{align*}
which implies that $C^\bullet_{>} (L\oplus M, M)$ is closed for the bracket $\llbracket ~, ~ \rrbracket$. This shows that $C^{\bullet +1 }_{>} (L \oplus M, M)$ is a differential graded Lie subalgebra of $(C^{\bullet + 1} (L \oplus M, L \oplus M), \llbracket ~, ~ \rrbracket, d)$. In the rest of this section, we will denote the differential graded Lie algebra $\big(  C^{\bullet + 1}_{>} (L \oplus M, M), \llbracket ~, ~ \rrbracket, d   \big)$ simply by $\mathfrak{g}$. Note that
\begin{align*}
    \mathfrak{g}^0 = C^1_{>} (L \oplus M, M) = C^{1,0} = \mathrm{Hom}_H (L,M)
\end{align*}
and it can be easily checked that $\mathfrak{g}^0$ is abelian and all elements of $\mathfrak{g}^0$ are ad-nilpotent. Moreover,
\begin{align*}
    \mathfrak{g}^1 = C^2_{>} (L \oplus M, M) = C^{2,0} \oplus C^{1,1}  ~~~ \text{ and } ~~~ \mathfrak{g}^2 = C^3_{>} (L \oplus M, M) = C^{3,0} \oplus C^{2,1} \oplus C^{1,2}.
\end{align*}



\begin{proposition}\label{deligne-2}
Let $L$ and $M$ be two Lie $H$-pseudoalgebras. 

(i) An element $\chi + \psi \in \mathfrak{g}^1 = C^{2,0} \oplus C^{1,1}$ is a Maurer-Cartan element of the differential graded Lie algebra $\mathfrak{g}$ if and only if the pair $(\chi, \psi)$ is a non-abelian $2$-cocycle of the Lie $H$-pseudoalgebra with values in $M$.
    
 (ii)  Two Maurer-Cartan elements $ \chi + \psi$ and $\chi'+\psi'$ are equivalent in the differential graded Lie algebra $\mathfrak{g}$ if and only if the corresponding non-abelian $2$-cocycles $(\chi, \psi)$ and $(\chi', \psi')$ are equivalent.
\end{proposition}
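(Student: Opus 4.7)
The plan is to translate both claims into statements about Lie $H$-pseudoalgebra structures on $L \oplus M$ and then invoke Theorem~\ref{nab-theorem} as a black box. Set $\rho := \rho_L + \rho_M$; since $\rho_L$ and $\rho_M$ act on disjoint summands of $L\oplus M$, the element $\rho$ is a Maurer-Cartan element of the ambient graded Lie algebra $C^{\bullet+1}(L\oplus M, L\oplus M)$, so $\llbracket \rho, \rho\rrbracket = 0$, and the differential on $\mathfrak{g}$ is just $d = \llbracket \rho, -\rrbracket$. For any $\alpha = \chi + \psi \in \mathfrak{g}^1 = C^{2,0} \oplus C^{1,1}$, the sum $\rho + \alpha \in C^2(L\oplus M, L\oplus M)$ produces a candidate pseudobracket on $L\oplus M$, and a bigrading check shows it is precisely the pseudobracket (\ref{direct-pseudo}): the $C^{2,0}$ part carries $\chi$ alongside $[\cdot * \cdot]_L$, the skew-symmetrized $C^{1,1}$ part carries $\psi$, and the $C^{0,2}$ part carries $[\cdot * \cdot]_M$.

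For part (i), the vanishing $\llbracket \rho, \rho\rrbracket = 0$ rewrites the master equation for $\rho + \alpha$ as
\begin{align*}
\tfrac{1}{2}\llbracket \rho + \alpha, \rho + \alpha\rrbracket = \llbracket \rho, \alpha\rrbracket + \tfrac{1}{2}\llbracket \alpha, \alpha\rrbracket = d\alpha + \tfrac{1}{2}\llbracket\alpha, \alpha\rrbracket,
\end{align*}
so $\alpha$ is Maurer-Cartan in $\mathfrak{g}$ if and only if $\rho + \alpha$ satisfies the Jacobi identity on $L\oplus M$, if and only if (\ref{direct-pseudo}) defines a Lie $H$-pseudoalgebra. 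By the converse half of the proof of Theorem~\ref{nab-theorem} this is exactly the condition that $(\chi, \psi)$ is a non-abelian $2$-cocycle. As a cross-check, decomposing the master equation by bidegree identifies the $(3,0)$-, $(2,1)$-, and $(1,2)$-components with the three cocycle identities (\ref{second-iden}), (\ref{first-iden}), and (\ref{deri-iden}), respectively.

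For part (ii), first verify the gauge setup. The component $\mathfrak{g}^0 = C^{1,0} = \mathrm{Hom}_H(L,M)$ is abelian, since a bracket of two $C^{1,0}$ elements would land in the empty $C^{2,-1}$; and every $\varphi \in \mathfrak{g}^0$ is ad-nilpotent because $\mathrm{ad}_\varphi$ strictly lowers the $M$-index of the bigrading. Extending $\varphi$ by zero to $\tilde\varphi : L\oplus M \to L\oplus M$, $(x,u) \mapsto (0, \varphi(x))$, one gets $\tilde\varphi^2 = 0$ and hence $e^{\tilde\varphi}(x,u) = (x, u + \varphi(x))$, which is precisely the change-of-section map $\Theta$ used in Theorem~\ref{nab-theorem}. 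Using graded skew-symmetry to write $d\varphi = \llbracket \rho, \varphi\rrbracket = -\mathrm{ad}_\varphi \rho$, I compute
\begin{align*}
g_\varphi = -\sum_{n \ge 0}\tfrac{1}{(n+1)!}(\mathrm{ad}_\varphi)^n d\varphi = \sum_{n\ge 1}\tfrac{1}{n!}(\mathrm{ad}_\varphi)^n \rho = \bigl(e^{\mathrm{ad}_\varphi}-1\bigr)\rho,
\end{align*}
so the equivalence $\alpha' = e^{\mathrm{ad}_\varphi}\alpha + g_\varphi$ rearranges to $\rho + \alpha' = e^{\mathrm{ad}_\varphi}(\rho + \alpha)$; this is exactly the statement that $e^{\tilde\varphi}$ is an isomorphism of Lie $H$-pseudoalgebras $L\oplus_{(\chi,\psi)}M \to L \oplus_{(\chi', \psi')}M$ intertwining the short exact sequences. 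By Theorem~\ref{nab-theorem} this is exactly the equivalence of the non-abelian $2$-cocycles $(\chi,\psi)$ and $(\chi', \psi')$, and reading off the $C^{2,0}$- and $C^{1,1}$-bidegree components of $\rho + \alpha' = e^{\mathrm{ad}_\varphi}(\rho + \alpha)$ recovers (\ref{equiv2}) and (\ref{equiv1}) directly.

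The main anticipated obstacle is the combinatorial one of decomposing the Nijenhuis-Richardson-type bracket $\llbracket -, -\rrbracket$ by bidegree so that each piece of the master equation faithfully produces the correct cocycle identity, with all shuffle signs and permutations $\sigma_{i\to\sigma(i)}$ correctly tracked. The plan bypasses this head-on combinatorics by packaging $\chi$ and $\psi$ into a single pseudobracket on $L\oplus M$ and invoking Theorem~\ref{nab-theorem}, which already carried out the detailed computation on the extension side.
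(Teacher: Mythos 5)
Your proposal is correct, but it proves the proposition by a genuinely different route than the paper. The paper argues by direct computation: for (i) it expands $d(\chi+\psi)+\tfrac{1}{2}\llbracket \chi+\psi,\chi+\psi\rrbracket$ on arguments of each bidegree and matches the three components with (\ref{deri-iden}), (\ref{first-iden}), (\ref{second-iden}); for (ii) it evaluates $e^{\mathrm{ad}_\beta}c$ and $g_\beta$ term by term (the series truncate) and compares the result with (\ref{equiv1})--(\ref{equiv2}). You instead identify $\rho_L+\rho_M+\chi+\psi$ with the candidate pseudobracket (\ref{direct-pseudo}) on $L\oplus M$, so that the Maurer--Cartan equation in $\mathfrak{g}$ becomes, via the correspondence recalled in Section \ref{sec2}, the statement that (\ref{direct-pseudo}) is a Lie $H$-pseudoalgebra structure, and you convert the gauge relation into $\rho+\alpha'=e^{\mathrm{ad}_\varphi}(\rho+\alpha)$ through the clean identity $g_\varphi=(e^{\mathrm{ad}_\varphi}-1)\rho$; the Section \ref{sec3} computations then supply the dictionary with non-abelian $2$-cocycles and their equivalences. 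This buys conceptual transparency and avoids repeating the shuffle-sign bookkeeping, at the cost of leaning on the extension-side results.

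Two points need tightening. First, the converse half of the proof of Theorem \ref{nab-theorem} only gives ``cocycle $\Rightarrow$ Jacobi''; for the reverse implication you must apply (\ref{deri-iden}) and the proposition establishing (\ref{first-iden})--(\ref{second-iden}) to the split extension $0\to M\to L\oplus_{(\chi,\psi)}M\to L\to 0$ with the canonical section $s(x)=(x,0)$, noting that this section returns exactly the pair $(\chi,\psi)$. Second, with the paper's bracket conventions one has $\mathrm{ad}_{\tilde\varphi}\mu=\mu(\tilde\varphi\cdot,\cdot)+\mu(\cdot,\tilde\varphi\cdot)-\tilde\varphi\circ\mu$, so $e^{\mathrm{ad}_{\tilde\varphi}}$ implements transport along $e^{-\tilde\varphi}$, not $e^{\tilde\varphi}$: the relation $\rho+\alpha'=e^{\mathrm{ad}_{\tilde\varphi}}(\rho+\alpha)$ says that $(x,u)\mapsto(x,u-\varphi(x))$ is an equivalence $L\oplus_{(\chi,\psi)}M\to L\oplus_{(\chi',\psi')}M$, i.e.\ your $\Theta=e^{\tilde\varphi}$ goes the other way. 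Since equivalence of non-abelian $2$-cocycles is symmetric (replace $\varphi$ by $-\varphi$), the ``if and only if'' is unaffected, but the direction should be stated correctly, and the fact that exponentiating the inner derivation $\mathrm{ad}_{\tilde\varphi}$ coincides with transport by the $H$-module automorphism deserves a one-line justification (here it is a finite polynomial identity, since $\tilde\varphi^2=0$ and $\mathrm{ad}_{\tilde\varphi}$ is nilpotent), or else one falls back on the bidegree read-off, which is essentially the paper's computation.
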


\begin{proof}
(i) For any $x, y, z \in L$ and $u, v \in M$, we observe that
\begin{align*}
    &\big(  d(\chi + \psi) + \frac{1}{2} \llbracket \chi + \psi , \chi + \psi \rrbracket \big) (x, u, v) \\
    &= \big( \llbracket \rho_L + \rho_M, \chi + \psi \rrbracket +  \frac{1}{2} \llbracket \chi + \psi , \chi + \psi \rrbracket \big) (x, u, v) \\
    &= \big( \llbracket \rho_L , \chi \rrbracket + \llbracket \rho_L, \psi \rrbracket + \llbracket \rho_M, \chi \rrbracket + \llbracket \rho_M, \psi \rrbracket + i_{\chi + \psi} (\chi + \psi)  \big) (x, u, v) \\
    &= \big( i_{\rho_L} \chi + i_\chi \rho_L + i_{\rho_L} \psi + i_\psi \rho_L + i_{\rho_M} \chi + i_\chi \rho_M + i_{\rho_M} \psi + i_\psi \rho_M + i_\chi \chi + i_\chi \psi + i_\psi \chi + i_\psi \psi \big) (x, u, v) \\
    &= - \psi (x, [u*v]_M) + [\psi (x, u) * v]_M + [u * \psi (x, v)]_M. 
\end{align*}
    Similarly, we have
    \begin{align*}
     &\big(  d(\chi + \psi) + \frac{1}{2} \llbracket \chi + \psi , \chi + \psi \rrbracket \big) (x, y, u) \\
     &\quad = \psi ( [x*y]_L, u) + [\chi (x, y) * u]_M + \psi (y, \psi (x, u)) - \psi (x, \psi (y, u))
     \end{align*}
     and
     \begin{align*}
      &\big(  d(\chi + \psi) + \frac{1}{2} \llbracket \chi + \psi , \chi + \psi \rrbracket \big) (x, y, z) \\
      & \quad = - \chi (x, [y * z]_L) + \chi (y, [x * z]_L) - \chi (z, [x * y]_L)    - (x, \chi (y, z)) + \psi (y, \chi (x,z)) - \psi (z, \chi (x, y)).
\end{align*}
It follows that $ \chi + \psi $ is a Maurer-Cartan element of the differential graded Lie algebra $\mathfrak{g}$ if and only if the pair $(\chi, \psi)$ is a non-abelian $2$-cocycle of the Lie $H$-pseudoalgebra $L$ with values in $M$.

\medskip

  (ii)  Note that two elements $c$ and $c'$ in $MC (\mathfrak{g})$ are equivalent if there exists $\beta \in \mathrm{Hom}_H (L, M)$ such that
    \begin{align*}
        c' = e^{ad_\beta} c + g_\beta.
    \end{align*}
    Let $c= \chi + \psi$ and take any two elements $e = x+ u$, $e' = y+v \in L \oplus M$. Then we have
    \begin{align*}
        (e^{ad_\beta} c ) (e,e') =~& (e^{ad_\beta} (\chi+\psi)) (e,e') \\
        =~& \big(  \chi+ \psi + \llbracket \beta , \chi+\psi \rrbracket + \underbrace{\frac{1}{2} \llbracket \beta, \llbracket \beta, \chi + \psi \rrbracket \rrbracket + \cdots }_{= 0} \big) (e, e') \\
        =~& \chi (e,e') + \psi (e,e') + \psi (x, \beta (y)) - (\sigma_{12} \otimes_H 1) \psi (y, \beta (x)) \\
        =~& \chi (x, y) + \psi (x, v) - (\sigma_{12} \otimes_H 1) \psi (y, u) + \psi (x, \beta (y)) - (\sigma_{12} \otimes_H 1) \psi (y, \beta (x)).
    \end{align*}
    On the other hand, we observe that
    \begin{align*}
        - (d \beta) (e, e') =~& - \llbracket \rho_L + \rho_M, \beta \rrbracket (e,e') \\
        =~& - (\mathrm{id}_{H^{\otimes 2}} \otimes_H \beta) [x * y]_L + [\beta (x) * v]_M + [u * \beta (y)]_M
    \end{align*}
    and $- \llbracket \beta, d \beta \rrbracket (e, e') = 2 [\beta (x) * \beta (y)]_M.$ Hence
    \begin{align*}
        (g_\beta) (e, e') =~& - \bigg( \sum_{n \in \mathbb{N} \cup \{ 0 \} }  \frac{1}{(n+1)!} (ad_\beta)^n d \beta \bigg) (e, e') \\
        =~& - (d \beta ) (e, e') - \frac{1}{2} ( ad_\beta d\beta) (e, e') \\
        =~& - (d \beta ) (e, e') - \frac{1}{2} \llbracket \beta, d \beta \rrbracket (e, e') \\
        =~& - (\mathrm{id}_{H^{\otimes 2}} \otimes_H \beta) [x * y]_L + [\beta (x) * v]_M + [u * \beta (y)]_M + [\beta (x) * \beta (y)]_M.
    \end{align*}
    Therefore, if $c' = \chi' + \psi'$, then $c$ and $c'$ are equivalent if and only if
    \begin{align}\label{c-dash}
        c' (e, e') =~& \chi (x, y) + \psi (x, v) - (\sigma_{12} \otimes_H 1) \psi (y, u) + \psi (x, \beta (y)) - (\sigma_{12} \otimes_H 1) \psi (y, \beta (x)) \\ ~& - (\mathrm{id}_{H^{\otimes 2}} \otimes_H \beta) [x * y]_L + [\beta (x) * v]_M + [u * \beta (y)]_M + [\beta (x) * \beta (y)]_M. \nonumber
    \end{align}
    However, since $c' = \chi' + \psi'$, we have $c' (e,e') = \chi' (x, y) + \psi' (x, v) - (\sigma_{12} \otimes_H 1) \psi' (y, u)$. Comparing this with the expression of (\ref{c-dash}), we get that the non-abelian $2$-cocycles $(\chi', \psi')$ and $(\chi, \psi)$ are equivalent, and the equivalence is given by the map $\beta$. Conversely, equivalences between non-abelian $2$-cocycles imply equivalences between the corresponding Maurer-Cartan elements.
\end{proof}


An immediate consequence of Proposition \ref{deligne-2} is given by the following.

\begin{theorem}
    Let $L$ and $M$ be two Lie $H$-pseudoalgebras. Then 
    \begin{align*}
        H^2_{nab} (L,M) \cong \mathcal{MC} (\mathfrak{g}).
    \end{align*}
\end{theorem}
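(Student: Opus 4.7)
The plan is to derive this theorem as a direct consequence of Proposition \ref{deligne-2}, which has already done all the heavy lifting. The strategy is to exhibit an explicit bijection between representatives, verify it is compatible with both equivalence relations, and then pass to the quotient.

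First I would define a map $\Phi$ from the set of non-abelian $2$-cocycles of $L$ with values in $M$ to $MC(\mathfrak{g})$ by sending a pair $(\chi, \psi)$ with $\chi \in C^{2,0}$ and $\psi \in C^{1,1}$ to the sum $\chi + \psi \in \mathfrak{g}^1 = C^{2,0} \oplus C^{1,1}$. Since the decomposition $\mathfrak{g}^1 = C^{2,0} \oplus C^{1,1}$ is a direct sum, this assignment is bijective at the level of raw cochains. Part (i) of Proposition \ref{deligne-2} then asserts exactly that $(\chi, \psi)$ satisfies the non-abelian $2$-cocycle conditions (\ref{deri-iden}), (\ref{first-iden}) and (\ref{second-iden}) if and only if $\chi + \psi$ satisfies the Maurer-Cartan equation $d(\chi+\psi) + \tfrac{1}{2}\llbracket \chi+\psi, \chi+\psi\rrbracket = 0$. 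Hence $\Phi$ restricts to a bijection between the set of non-abelian $2$-cocycles and $MC(\mathfrak{g})$.

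Next I would check that $\Phi$ intertwines the two notions of equivalence. Part (ii) of Proposition \ref{deligne-2} says that $\chi + \psi$ and $\chi' + \psi'$ are equivalent in $MC(\mathfrak{g})$ (via some $\beta \in \mathfrak{g}^0 = \mathrm{Hom}_H(L,M)$) if and only if the corresponding pairs $(\chi,\psi)$ and $(\chi',\psi')$ are equivalent non-abelian $2$-cocycles (via the same map $\beta$, played by $\varphi$ in the defining identities (\ref{equiv1})–(\ref{equiv2})). Thus $\Phi$ descends to a well-defined map on equivalence classes, and its inverse is similarly well-defined, yielding the bijection $H^2_{nab}(L,M) \cong \mathcal{MC}(\mathfrak{g})$.

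There is no real obstacle here since Proposition \ref{deligne-2} already contains the content of both directions; the proof amounts to assembling parts (i) and (ii) and noting that the equivalence relations match on the nose. The only small point worth recording is that $\mathfrak{g}^0 = \mathrm{Hom}_H(L,M)$ is abelian and every element is ad-nilpotent (in fact $(ad_\beta)^3$ vanishes on $\mathfrak{g}^1$ and $\mathfrak{g}^2$ by the bidegree count $C^{m,n}$), so the Deligne groupoid is well-defined and the finite exponential sums appearing in $e^{ad_\beta}$ and $g_\beta$ used in Proposition \ref{deligne-2}(ii) are literally polynomials. With these observations in place, the theorem follows in one line.
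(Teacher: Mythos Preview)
Your proposal is correct and follows exactly the paper's approach: the paper simply states this theorem as ``an immediate consequence of Proposition \ref{deligne-2}'' with no further argument, and you have merely spelled out why parts (i) and (ii) of that proposition assemble into the claimed bijection on equivalence classes. Your additional remarks on the abelianness and ad-nilpotency of $\mathfrak{g}^0$ echo what the paper records just before the proposition, so nothing new is needed.
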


Let $\mathfrak{g} = (\mathfrak{g}, \llbracket ~,~ \rrbracket, d)$ be a differential graded Lie algebra and $\alpha \in MC (\mathfrak{g})$ be a Maurer-Cartan element. Then one can construct a new differential graded Lie algebra $(\mathfrak{g}, \llbracket ~,~ \rrbracket_\alpha, d_\alpha)$ on the same graded vector space with the operations given by
\begin{align*}
    \llbracket x, y \rrbracket_\alpha := \llbracket x, y \rrbracket ~~~ \text{ and } ~~~ d_\alpha (x) = d(x) + \llbracket \alpha, x \rrbracket, \text{ for } x, y \in \mathfrak{g}.
\end{align*}
The differential graded Lie algebra $(\mathfrak{g}, \llbracket ~,~ \rrbracket_\alpha, d_\alpha)$ is denoted by $\mathfrak{g}_\alpha$ and it is called the tangent complex of $\mathfrak{g}$ at $\alpha$.

Let $(L, [\cdot * \cdot]_L)$ be a Lie $H$-pseudoalgebra and $M$ be a representation with $\psi = \cdot * \cdot : L \otimes M \rightarrow H^{\otimes 2} \otimes_H M$ being the action map. Note that $\psi$ can be realized as a Maurer-Cartan element in the differential graded Lie algebra $\mathfrak{g}$. Hence we may consider the new differential graded Lie algebra $\mathfrak{g}_\psi$ twisted by the Maurer-Cartan element $\psi$.

\begin{proposition}
    With the above notations, $\{ C^\bullet (L, M), \delta \}$ is a differential graded Lie subalgebra of $\mathfrak{g}_{{\psi}}$.
\end{proposition}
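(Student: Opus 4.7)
The plan is to identify $C^n(L,M)$ with the subspace $C^{n,0} \subset \mathfrak{g}^{n-1}$ of $H^{\otimes n}$-linear skew-symmetric cochains on $L^{\otimes n}$, and then verify two independent claims: that the bracket $\llbracket ~,~ \rrbracket$ vanishes identically on $\bigoplus_n C^{n,0}$, and that the twisted differential $d_\psi$ preserves this graded subspace and restricts there to $\delta$. Since $M$ is merely a representation of $L$, we view it as a trivial Lie $H$-pseudoalgebra with $\rho_M = 0$, so $d_\psi = \llbracket \rho_L + \psi, - \rrbracket$.

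For the vanishing of the bracket, the key observation is that any element of $C^{m,0}$ is by convention extended by zero on $M$-valued arguments, while its output lies in $M$. Hence for $f \in C^{m,0}$ and $g \in C^{n,0}$, the formula $\llbracket f, g \rrbracket = i_f g - (-1)^{(m-1)(n-1)} i_g f$ vanishes term by term: $i_f g$ would feed the $M$-valued output of $f$ into an input slot of $g$, but $g$ kills any $M$-input; the same argument gives $i_g f = 0$. Thus $\bigoplus_n C^{n,0}$ is automatically closed under the bracket and inherits the trivial graded Lie structure (so the bracket condition of a subalgebra is trivially satisfied).

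For the differential, expand $d_\psi \theta = \llbracket \rho_L, \theta \rrbracket + \llbracket \psi, \theta \rrbracket$ for $\theta \in C^{n,0}$. Among the four constituent $i$-terms, $i_\theta \rho_L$ and $i_\psi \theta$ both vanish by the same $M$-output-into-$L$-only-slot reason (since $\rho_L \in C^{2,0}$ takes only $L$-inputs, and $\theta \in C^{n,0}$ does as well). What remains is $d_\psi \theta = i_{\rho_L}\theta \pm i_\theta \psi$, which lies in $C^{n+1,0}$. Expanding the shuffle sum in $i_{\rho_L}\theta$ reproduces exactly the bracket-type sum $\sum_{i<j}(-1)^{i+j+1}(\sigma_{\substack{1 \to i \\ 2 \to j}} \otimes_H 1)\, \theta([x_i * x_j]_L, x_1, \ldots, \widehat{x_i}, \ldots, \widehat{x_j}, \ldots, x_{n+1})$ appearing in $\delta$. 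Expanding $i_\theta \psi$, after using the skew-symmetry of $\psi \in C^{1,1}$ to move the $M$-input (which is the value $\theta(\ldots)$) into the appropriate slot, yields the action-type sum $\sum_i (-1)^i (\sigma_{1 \to i} \otimes_H 1)(x_i * \theta(x_1, \ldots, \widehat{x_i}, \ldots, x_{n+1}))$. The main technical obstacle will be the sign bookkeeping: reconciling the $(-1)^\sigma$ from shuffles in the Nijenhuis--Richardson-type formula, together with the shift between the $\mathfrak{g}$-grading $(n-1)$ and the cochain degree $n$, with the explicit signs $(-1)^i$ and $(-1)^{i+j+1}$ in the definition of $\delta$. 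Once this is verified, $(C^\bullet(L,M), 0, \delta)$ is a differential graded Lie algebra, and the inclusion $C^{\bullet,0} \hookrightarrow \mathfrak{g}_\psi$ is a morphism of dgLa's, proving the statement.
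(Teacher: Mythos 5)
Your proposal follows essentially the same route as the paper: the bracket vanishes on $\bigoplus_n C^{n,0}$ because $M$-valued outputs are killed when inserted into cochains that only accept $L$-inputs, and $d_\psi\theta=\llbracket\rho_L,\theta\rrbracket+\llbracket\psi,\theta\rrbracket$ reduces to $i_{\rho_L}\theta$ and $i_\theta\psi$ (the other two insertion terms vanishing for the same reason), which reproduce the two sums defining $\delta$. The only thing you defer is the explicit sign verification, which the paper simply records as the outcome of the computation; your identification of which terms survive and what they become is correct.
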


\begin{proof}
    For any $f \in C^m (L, M)$ and $g \in C^n (L,M)$, it follows from the definition of the bracket that $\llbracket f, g \rrbracket_{{\psi}} = \llbracket f, g \rrbracket = 0$. This shows that $C^{\bullet +1} (L,M)$ is a graded Lie subalgebra of $(C^{\bullet +1}_{>} (L\oplus M, M), \llbracket ~, ~ \rrbracket_{{\psi}})$. Next, for any $\theta \in C^n (L,M)$, we have
    \begin{align*}
        (d \theta) (x_1, \ldots, x_{n+1} ) =~& \llbracket \rho_L + \rho_M , \theta \rrbracket (x_1, \ldots, x_{n+1} ) \\
        =~& \sum_{i < j} (-1)^{i+j +1} (\sigma_{\substack{1 \rightarrow i \\
    2 \rightarrow j}} \otimes_H 1) \theta ([x_i * x_j]_L, x_1, \ldots, \widehat{x_i}, \ldots, \widehat{x_j}, \ldots, x_{n+1}).
    \end{align*}
    On the other hand,
    \begin{align*}
        \llbracket {\psi}, \theta \rrbracket (x_1, \ldots, x_{n+1}) =~& \big(  i_{{\psi}} \theta - (-1)^{n-1} ~i_\theta {\psi}  \big) (x_1, \ldots, x_{n+1}) \\
        =~& - (-1)^{n-1} \sum_{\sigma \in \mathrm{Sh} (n,1)} (-1)^\sigma (\sigma_{i \rightarrow \sigma (i)} \otimes_H 1) ~ {\psi} \big(  \theta (x_{\sigma (1)}, \ldots, x_{\sigma (n)}), x_{\sigma (n+1)} \big)  \\
        =~& \sum_{i=1}^{n+1} (-1)^i ~x_i * \theta (x_1, \ldots, \widehat{x_i}, \ldots, x_{n+1}).
    \end{align*}
    Thus, we have $d_{{\psi}} (\theta) = d (\theta) + \llbracket{\psi}, \theta \rrbracket = \delta (\theta) \in C^{n+1} (L,M)$. Here $\delta$ is the coboundary operator of the Lie $H$-pseudoalgebra $(L, [\cdot * \cdot ]_L)$ with coefficients in the representation $M$. This proves the result.
\end{proof}

The above proposition shows that $( C^{\bullet + 1} (L, M), \llbracket ~, ~\rrbracket_\psi, d_\psi)$ is a differential graded Lie algebra. Moreover, the cohomology groups induced by the differential $d_\psi$ coincide with the cohomology groups $H^\bullet (L, M)$ of the Lie $H$-pseudoalgebra $L$ with coefficients in the representation $M$.

\section{Inducibility of a pair of Lie {\em H}-pseudoalgebra automorphisms and the Wells map}\label{sec5}

Given a non-abelian extension of Lie $H$-pseudoalgebras, here we study the inducibility problem for a pair of Lie $H$-pseudoalgebra automorphisms. To find out the corresponding obstruction, we define the Wells map in the present context. Finally, we construct a short exact sequence connecting various automorphism groups and the Wells map.

Let $0 \rightarrow (M , [\cdot * \cdot]_M) \xrightarrow{i} (E, [\cdot * \cdot]_E) \xrightarrow{p} (L, [\cdot * \cdot]_L) \rightarrow 0$ be a given non-abelian extension of the Lie $H$-pseudoalgebra $L$ by the Lie $H$-pseudoalgebra $M$. We denote $\mathrm{Aut}_M (E)$ by the group of all Lie $H$-pseudoalgebra automorphisms $\gamma \in \mathrm{Aut}(E)$ for which $\gamma (M) \subset M$. For any $\gamma \in \mathrm{Aut}_M (E)$, we obviously have $\gamma|_M \in \mathrm{Aut} (M)$. Further, if $s$ is any section of the map $p$, then we can define a $H$-linear map $\overline{\gamma} : L \rightarrow L$ by $\overline{\gamma} (x) = p \gamma s (x)$, for $x \in L$. If $s'$ is any other section of the map $p$, then we have $(s- s') (x) \in \mathrm{ker }(p) = \mathrm{im }(i) \cong M$. Therefore, $\gamma (s- s')(x) \in M$ and hence $ p \gamma (s- s')(x) = 0$. This shows that the map $\overline{ \gamma}$ is independent of the choice of $s$. Next, we claim that the map $\overline{\gamma}: L \rightarrow L$ is a Lie $H$-pseudoalgebra automorphism of $L$. To see this, we first observe that the left $H$-module $L$ can be regarded as a left $H$-submodule of $E$ via any section $s$. The space $E$ is isomorphic to $M \oplus s(L)$. With this identification, the map $p$ is simply the projection onto $L$. Since $\gamma$ is bijective on $E$ preserving the $H$-module $M$, it must be bijective on the $H$-module $L$. In other words, the map $\overline{ \gamma} = p \gamma s$ is bijective on $L$. Further, for any $x, y \in L$, we have
\begin{align*}
    [ \overline{\gamma} (x) * \overline{\gamma} (y)]_L =~& [p \gamma s (x) * p \gamma s (y)]_L \\
    =~& (\mathrm{id}_{H^{\otimes 2}} \otimes_H p \gamma ) [s(x) * s(y)]_E \\
    =~& (\mathrm{id}_{H^{\otimes 2}} \otimes_H p \gamma ) \big(  [s(x) * s(y)]_E - \chi (x, y)  \big) ~~~~ (\text{as } \gamma (M) \subset M \text{ and } p|_M = 0)\\
    =~& (\mathrm{id}_{H^{\otimes 2}} \otimes_H p \gamma ) \big( (\mathrm{id}_{H^{\otimes 2}} \otimes_H s ) [x*y]_L   \big)
    \\
    =~& (\mathrm{id}_{H^{\otimes 2}} \otimes_{H} p \gamma s) [x * y]_L =  (\mathrm{id}_{H^{\otimes 2}} \otimes_{H} \overline{\gamma}) [x * y]_L.
\end{align*}
Hence $\overline{\gamma} \in \mathrm{Aut} (L)$ which proves our claim. Therefore, we obtain a group homomorphism 
\begin{align*}
    \tau : \mathrm{Aut}_M (E) \rightarrow \mathrm{Aut} (M) \times \mathrm{Aut}(L), ~ \tau (\gamma) = (\gamma|_M, \overline{\gamma}).
\end{align*}

\begin{definition}
    A pair $(\beta, \alpha) \in \mathrm{Aut} (M) \times \mathrm{Aut}(L)$ of Lie $H$-pseudoalgebra automorphisms is said to be {\em inducible} if there exists an automorphism $\gamma \in \mathrm{Aut}_M (E)$ such that $\tau (\gamma) = (\beta, \alpha)$, i.e., the pair $(\beta, \alpha)$ lies in the image of $\tau$.
\end{definition}

Let $s: L \rightarrow E$ be any section of the map $p$ and let $(\chi, \psi)$ be the non-abelian $2$-cocycle corresponding to the given non-abelian extension induced by $s$. Let $(\beta, \alpha) \in \mathrm{Aut} (M) \times \mathrm{Aut} (L)$ be any pair of Lie $H$-pseudoalgebra automorphisms. We define a skew-symmetric map $\chi_{(\beta, \alpha)} \in \mathrm{Hom}_{H^{\otimes 2}} (L \otimes L, H^{\otimes 2} \otimes_H M)$ and a map $\psi_{(\beta, \alpha)} \in \mathrm{Hom}_{H^{\otimes 2}} (L \otimes M, H^{\otimes 2} \otimes_H M)$ by
\begin{align*}
\chi_{(\beta, \alpha)} (x, y) := (\mathrm{id}_{H^{\otimes 2}} \otimes_H \beta) \chi (\alpha^{-1} (x) , \alpha^{-1} (y)) ~ \text{ and } ~ \psi_{(\beta, \alpha)} (x, u) := (\mathrm{id}_{H^{\otimes 2}} \otimes_H \beta) \psi (\alpha^{-1} (x) , \beta^{-1} (y)),
\end{align*}
for $x, y \in L$ and $u \in M$. Then we have the following.

\begin{lemma}
The pair $( \chi_{(\beta, \alpha)}, \psi_{(\beta, \alpha)}  )$ is a non-abelian $2$-cocycle of $(L, [\cdot * \cdot]_L)$ with values in $(M, [\cdot * \cdot]_M).$
\end{lemma}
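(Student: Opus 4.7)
The plan is to deduce this lemma from the classification Theorem \ref{nab-theorem} by transporting the given extension along the pair $(\beta, \alpha)$, thereby avoiding a direct verification of the four cocycle identities.

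First I would fix a section $s : L \to E$ of the original non-abelian extension. This yields the $H$-module decomposition $E \cong M \oplus s(L)$, and using it I define an $H$-module automorphism $\gamma : E \to E$ by
\begin{align*}
\gamma (u + s(x)) := \beta (u) + s(\alpha (x)), \quad u \in M,\ x \in L.
\end{align*}
This $\gamma$ is an $H$-linear bijection satisfying $\gamma|_M = \beta$, $\gamma \circ s = s \circ \alpha$ and $p \circ \gamma = \alpha \circ p$, but it need not preserve the pseudobracket $[\cdot * \cdot]_E$.

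Next, I would transport the pseudobracket along $\gamma$ by setting
\begin{align*}
[e * e']'_E := (\mathrm{id}_{H^{\otimes 2}} \otimes_H \gamma) \big[ \gamma^{-1}(e) * \gamma^{-1}(e') \big]_E, \quad e, e' \in E.
\end{align*}
Because $\gamma$ is an $H$-module isomorphism, the bracket $[\cdot * \cdot]'_E$ is automatically a Lie $H$-pseudoalgebra structure on $E$ (skew-symmetry and the Jacobi identity are preserved under transport along an $H$-module bijection). Since $\gamma|_M = \beta$ is an automorphism of the Lie $H$-pseudoalgebra $M$, the bracket $[\cdot * \cdot]'_E$ restricts to $[\cdot * \cdot]_M$ on $M$; and since $p \circ \gamma = \alpha \circ p$ with $\alpha$ an automorphism of $L$, the projection $p$ remains a Lie $H$-pseudoalgebra homomorphism from $(E, [\cdot * \cdot]'_E)$ onto $L$. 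Hence one obtains a new non-abelian extension of $L$ by $M$ on the same underlying $H$-modules, which represents a class in $\mathrm{Ext}_{nab}(L, M)$.

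Finally, I would compute the non-abelian $2$-cocycle associated to this transported extension using the same section $s$. Combining the identity $\gamma^{-1} \circ s = s \circ \alpha^{-1}$ with the definition of $(\chi, \psi)$ in (\ref{chi-psi}) and the fact that $\alpha$ is a Lie $H$-pseudoalgebra automorphism of $L$, a direct substitution shows that the resulting cocycle is exactly $(\chi_{(\beta, \alpha)}, \psi_{(\beta, \alpha)})$. Since every cocycle coming from a non-abelian extension is a non-abelian $2$-cocycle, the lemma follows. The only delicate point I anticipate is the bookkeeping in this last substitution --- correctly inserting $(\mathrm{id}_{H^{\otimes 2}} \otimes_H \alpha)$ and $(\mathrm{id}_{H^{\otimes 2}} \otimes_H \beta)$ at the right moments --- which is precisely where the hypothesis that $\alpha$ and $\beta$ are Lie $H$-pseudoalgebra automorphisms, rather than merely $H$-module bijections, enters.
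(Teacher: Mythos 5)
Your argument is correct, but it proceeds differently from the paper. The paper proves the lemma by brute force: it takes the three identities (\ref{deri-iden}), (\ref{first-iden}), (\ref{second-iden}) satisfied by $(\chi,\psi)$, substitutes $\alpha^{-1}(x),\alpha^{-1}(y),\alpha^{-1}(z),\beta^{-1}(u),\beta^{-1}(v)$ for the arguments, rewrites $[\beta^{-1}(u)*\beta^{-1}(v)]_M$ and $[\alpha^{-1}(x)*\alpha^{-1}(y)]_L$ using that $\beta$ and $\alpha$ are pseudoalgebra automorphisms, and applies $(\mathrm{id}\otimes_H\beta)$ to land on the same identities for $(\chi_{(\beta,\alpha)},\psi_{(\beta,\alpha)})$. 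You instead transport the bracket of $E$ along the $H$-module automorphism $\gamma(u+s(x))=\beta(u)+s(\alpha(x))$, check that $(E,[\cdot*\cdot]'_E)$ with the same $i,p$ is again a non-abelian extension (here $\gamma|_M=\beta$ and $p\gamma=\alpha p$ with $\alpha,\beta$ automorphisms are exactly what is needed), and then observe, using $\gamma^{-1}s=s\alpha^{-1}$, that the cocycle of the transported extension computed with the same section $s$ is precisely $(\chi_{(\beta,\alpha)},\psi_{(\beta,\alpha)})$; your verification of this last substitution goes through as you indicate. The conclusion then follows not really from Theorem \ref{nab-theorem} itself but from the earlier, more elementary facts of Section \ref{sec3}, namely identity (\ref{deri-iden}) and the unnamed proposition establishing (\ref{first-iden})--(\ref{second-iden}) for the pair attached to any extension and any section; you should cite those rather than the classification theorem. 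Your route is more conceptual: it explains the lemma as saying that $(\beta,\alpha)$ acts on extensions by twisting, which meshes nicely with the paper's later remark that $\mathrm{Aut}(M)\times\mathrm{Aut}(L)$ acts on $H^2_{nab}(L,M)$ and that $\mathcal{W}$ is a principal crossed homomorphism, at the modest cost of the (routine, but worth one line) check that an $H$-module bijection transports a Lie $H$-pseudoalgebra structure, since the iterated brackets (\ref{xy-z})--(\ref{x-yz}) are natural with respect to $H$-linear maps. The paper's direct substitution is shorter to write down but hides this structural interpretation.
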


\begin{proof}
    Since $(\chi, \psi)$ is a non-abelian $2$-cocycle, we have the identities (\ref{deri-iden}), (\ref{first-iden}) and (\ref{second-iden}). In these identities, if we replace $x, y, z, u, v$ by $\alpha^{-1} (x) , \alpha^{-1} (y), \alpha^{-1} (z), \beta^{-1} (u), \beta^{-1}(v)$, we get the cocycle conditions for $( \chi_{(\beta, \alpha)}, \psi_{(\beta, \alpha)}  )$. For example, if we replace $x, u, v$ by $\alpha^{-1} (x), \beta^{-1}(u), \beta^{-1}(v)$ in the identity (\ref{deri-iden}), we obtain
    \begin{align*}
        [\psi (\alpha^{-1} (x), \beta^{-1}(u)) * \beta^{-1}(v)]_M = \psi (\alpha^{-1}(x), [\beta^{-1} (u) * \beta^{-1}(v)]_M) - [\beta^{-1}(u) * \psi (\alpha^{-1} (x) , \beta^{-1} (v))]_M
    \end{align*}
    which can be written as
    \begin{align*}
        [  (\mathrm{id}_{H^{\otimes 2}} \otimes_H  \beta^{-1}) \psi_{(\beta, \alpha)} (x, u) * \beta^{-1} (v)  ]_M =~& \psi (\alpha^{-1} (x) , (\mathrm{id}_{H^{\otimes 2}} \otimes_H  \beta^{-1}) [u * v]_M ) \\
        &- [\beta^{-1} (u) *  (\mathrm{id}_{H^{\otimes 2}} \otimes_H  \beta^{-1}) \psi_{(\beta, \alpha)} (x, v)].
    \end{align*}
    If we apply $(\mathrm{id}_{H^{\otimes 3}} \otimes_H \beta)$ to both sides, we simply get
    \begin{align*}
        [\psi_{(\beta, \alpha)} (x, u) * v]_M  = \psi_{(\beta, \alpha)} (x, [u * v]_M) - [u * \psi_{(\beta, \alpha)} (x, v)]_M.
    \end{align*}
    This is nothing but the identity (\ref{deri-iden}) for $\psi_{(\beta, \alpha)}$. Similarly, we will obtain the identities (\ref{first-iden}) and (\ref{second-iden}) for the pair $(\chi_{(\beta, \alpha)}, \psi_{(\beta, \alpha)})$. Hence $(\chi_{(\beta, \alpha)}, \psi_{(\beta, \alpha)})$ is a non-abelian $2$-cocycle.
\end{proof}

Note that the non-abelian $2$-cocycle  $( \chi_{(\beta, \alpha)}, \psi_{(\beta, \alpha)}  )$ depends on the section $s$ as the non-abelian $2$-cocycle $( \chi, \psi )$ is so. We now define a map $\mathcal{W} : \mathrm{Aut} (M) \times \mathrm{Aut}(L) \rightarrow H^2_{nab} (L, M)$ by
\begin{align*}
    \mathcal{W} \big(   (\beta, \alpha) \big) := [ ( \chi_{(\beta, \alpha)}, \psi_{(\beta, \alpha)}  ) -  ( \chi, \psi ) ],
\end{align*}
the equivalence class of $( \chi_{(\beta, \alpha)}, \psi_{(\beta, \alpha)}  ) -  ( \chi, \psi )$.
The map $\mathcal{W}$ is called the {\em Wells map}. The first important property of the Wells map is given by the following.

\begin{lemma}
    The Wells map does not depend on the chosen section.
\end{lemma}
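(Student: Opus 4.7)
The plan is to show that replacing the section $s$ by any other section $s'$ produces the same class in $H^2_{nab}(L, M)$. Let $s'$ be a second section of $p$, with associated non-abelian $2$-cocycle $(\chi', \psi')$ and transported pair $(\chi'_{(\beta,\alpha)}, \psi'_{(\beta,\alpha)})$. Since $p \circ (s - s') = 0$, the difference $\varphi := s - s'$ factors through $M$ and defines an $H$-linear map $\varphi : L \to M$. From the computation preceding (\ref{equiv1})-(\ref{equiv2}) in Section \ref{sec3}, this $\varphi$ realises the equivalence $(\chi, \psi) \sim (\chi', \psi')$.

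The first step is to twist $\varphi$ by $(\beta, \alpha)$. Set $\tilde{\varphi} := \beta \circ \varphi \circ \alpha^{-1} \in \mathrm{Hom}_H(L, M)$. I would check that $\tilde{\varphi}$ witnesses the equivalence $(\chi_{(\beta,\alpha)}, \psi_{(\beta,\alpha)}) \sim (\chi'_{(\beta,\alpha)}, \psi'_{(\beta,\alpha)})$. This reduces to substituting $\alpha^{-1}(x), \alpha^{-1}(y), \beta^{-1}(u)$ for $x, y, u$ in the identities (\ref{equiv1})-(\ref{equiv2}) satisfied by $\varphi$, and then applying $(\mathrm{id}_{H^{\otimes 2}} \otimes_H \beta)$ to both sides. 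The automorphism properties of $\alpha \in \mathrm{Aut}(L)$ and $\beta \in \mathrm{Aut}(M)$ ensure that the brackets and cocycles transport coherently into the corresponding identities written in terms of $\tilde{\varphi}$.

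The main step is to combine the two equivalences. I propose that the $H$-linear map $\Phi := \tilde{\varphi} - \varphi : L \to M$ realises the equivalence between the section-$s$ representative $(\chi_{(\beta,\alpha)} - \chi,\, \psi_{(\beta,\alpha)} - \psi)$ and the section-$s'$ representative $(\chi'_{(\beta,\alpha)} - \chi',\, \psi'_{(\beta,\alpha)} - \psi')$ of the Wells map. Condition (\ref{equiv1}) for $\Phi$ is immediate, since it is linear in the equivalence map, so subtracting the $\varphi$- and $\tilde{\varphi}$-versions yields $[\Phi(x) * u]_M$ directly.

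The hard part will be verifying (\ref{equiv2}) for $\Phi$: the quadratic correction $[\varphi(x) * \varphi(y)]_M$ in (\ref{equiv2}) and the asymmetric occurrence of $\psi'$ versus $\psi'_{(\beta,\alpha)}$ leave several cross terms after the naive subtraction. To close the identity I would use (a) $\beta$ being an automorphism of $(M, [\cdot * \cdot]_M)$, which rewrites $[\tilde{\varphi}(x) * \tilde{\varphi}(y)]_M$ as a $\beta$-push-forward of $[\varphi(\alpha^{-1}(x)) * \varphi(\alpha^{-1}(y))]_M$, (b) the equivalence (\ref{equiv1}) for both $\varphi$ and $\tilde{\varphi}$ to swap $\psi$-terms for $\psi'$-terms at the cost of bracket corrections, and (c) the derivation identity (\ref{deri-iden}). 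A cleaner alternative, which I would pursue in parallel, is to appeal to the Deligne-groupoid framework of Section \ref{sec4}: the two candidate representatives correspond to gauge-equivalent Maurer-Cartan elements in the differential graded Lie algebra $\mathfrak{g}$ twisted by the MC element $\chi + \psi$, with gauge parameter exactly $\Phi$, making section-independence manifest without explicit bookkeeping.
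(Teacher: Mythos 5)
Your proposal is correct and follows essentially the same route as the paper: the paper also sets $\varphi = s - s'$, shows by the same substitute-$\alpha^{-1},\beta^{-1}$-and-apply-$\beta$ computation that $\beta\varphi\alpha^{-1}$ gives an equivalence $(\chi_{(\beta,\alpha)},\psi_{(\beta,\alpha)}) \sim (\chi'_{(\beta,\alpha)},\psi'_{(\beta,\alpha)})$, and then concludes that the two representatives of $\mathcal{W}((\beta,\alpha))$ are equivalent via exactly your map $\Phi = \beta\varphi\alpha^{-1} - \varphi$. The cross-term bookkeeping you flag as the hard part is not carried out in the paper either; it asserts the equivalence of the differences at the same level of detail, so your argument is in line with the published proof.
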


\begin{proof}
    Let $s' : L \rightarrow E$ be any other section of the map $p$ and let $(\chi', \psi')$ be the non-abelian $2$-cocycle corresponding to the given non-abelian extension and induced by the section $s'$. We have already seen in Section \ref{sec3} that the non-abelian $2$-cocycles $(\chi, \psi)$ and $(\chi', \psi')$ are equivalent, and an equivalence is given by the map $\varphi := s - s'$. 

    Next, we observe that
    \begin{align*}
        \psi_{(\beta, \alpha)} (x, u) - \psi'_{(\beta, \alpha)} (x, u) =~& (\mathrm{id}_{H^{\otimes 2}} \otimes_H \beta) \big\{  \psi ( \alpha^{-1} (x), \beta^{-1} (u)) -  \psi' ( \alpha^{-1} (x), \beta^{-1} (u)) \big\} \\
        =~& (\mathrm{id}_{H^{\otimes 2}} \otimes_H \beta) [\varphi \alpha^{-1} (x) * \beta^{-1} (u)]_M = [\beta \varphi \alpha^{-1} (x) * u]_M,
    \end{align*}
    for $x \in L$, $u \in M$. Similarly, for $x, y \in L$,
    \begin{align*}
        \chi_{(\beta, \alpha)} (x, y) - \chi'_{(\beta, \alpha)} (x, y) =~& \psi'_{(\beta, \alpha)} (x, \beta \varphi \alpha^{-1} (y)) - (\sigma_{12} \otimes_H 1)~ \psi'_{(\beta, \alpha)} (y, \beta \varphi \alpha^{-1} (x)) \\
        &- (\beta \varphi \alpha^{-1}) [x * y]_L + [\beta \varphi \alpha^{-1} (x) * \beta \varphi \alpha^{-1} (y)]_M.
    \end{align*}
    This proves that the non-abelian $2$-cocycles $(\chi_{(\beta, \alpha)} , \psi_{(\beta, \alpha)})$ and $(\chi'_{(\beta, \alpha)} , \psi'_{(\beta, \alpha)})$ are equivalent. Hence the $2$-cocycles 
    \begin{align*}
        (\chi_{(\beta, \alpha)} , \psi_{(\beta, \alpha)}) - (\chi , \psi) \quad \text{ and } \quad (\chi'_{(\beta, \alpha)} , \psi'_{(\beta, \alpha)}) - (\chi' , \psi')
    \end{align*}
    are equivalent, and equivalence is given by the map $\beta \varphi \alpha^{-1} - \varphi$. This proves the desired result.
\end{proof}

\begin{remark}
    Note that the group $\mathrm{Aut} (M) \times \mathrm{Aut} (L)$ acts on the space $H^2_{nab} (L, M)$ by
    \begin{align*}
        (\beta, \alpha) \cdot [(\chi, \psi)] = [(\chi_{(\beta, \alpha)} , \psi_{(\beta, \alpha)})],
    \end{align*}
    for any $(\beta, \alpha) \in \mathrm{Aut} (M) \times \mathrm{Aut} (L)$ and $[(\chi, \psi)] \in H^2_{nab} (L, M)$. With this notation, the Wells map $\mathcal{W}$ is simply given by 
    \begin{align*}
         \mathcal{W} ((\beta, \alpha )) = (\beta, \alpha) \cdot [(\chi, \psi)] - [(\chi, \psi)],
    \end{align*}
    where $[(\chi, \psi)] \in H^2_{nab} (L, M)$ is the non-abelian cohomology class corresponding to the given non-abelian extension. This shows that the Wells map can be seen as a principal crossed homomorphism in the group cohomology complex of $\mathrm{Aut} (M) \times \mathrm{Aut} (L)$ with values in $H^2_{nab} (L, M)$.
\end{remark}

We are now in a position to find a necessary and sufficient condition for a pair of Lie $H$-pseudoalgebra automorphisms to be inducible. 

\begin{theorem}\label{nece-suff}(Necessary and sufficient condition) Let $ 0 \rightarrow (M, [\cdot * \cdot]_M) \xrightarrow{i} (E, [\cdot * \cdot]_E) \xrightarrow{p} (L, [\cdot * \cdot]_L) \rightarrow 0$  be a non-abelian extension of Lie $H$-pseudoalgebras. A pair $(\beta, \alpha) \in \mathrm{Aut}(M) \times \mathrm{Aut}(L)$ of Lie $H$-pseudoalgebra automorphisms is inducible if and only if $\mathcal{W} \big( (\beta, \alpha) \big) = 0$.
\end{theorem}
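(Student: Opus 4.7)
I would prove both implications constructively, using a map $\varphi : L \to M$ as the bridge between an inducing automorphism and an equivalence of non-abelian $2$-cocycles.

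For the forward direction, fix a section $s$ and suppose $(\beta, \alpha)$ is induced by some $\gamma \in \mathrm{Aut}_M(E)$. Since $p(\gamma s \alpha^{-1}(x) - s(x)) = \alpha \alpha^{-1}(x) - x = 0$, the element $\varphi(x) := \gamma s \alpha^{-1}(x) - s(x)$ lies in $M$, defining an $H$-linear map $\varphi : L \to M$. The plan is to show that $\varphi$ exhibits the equivalence between $(\chi_{(\beta,\alpha)}, \psi_{(\beta,\alpha)})$ and $(\chi, \psi)$. For $\psi$: applying $(\mathrm{id}_{H^{\otimes 2}} \otimes_H \gamma)$ to $[s\alpha^{-1}(x) * \beta^{-1}(u)]_E$ and using that $\gamma$ preserves pseudobrackets and agrees with $\beta$ on $M$, one rewrites $\psi_{(\beta,\alpha)}(x,u) = [\gamma s\alpha^{-1}(x) * u]_E = [(s(x) + \varphi(x)) * u]_E$, yielding condition (\ref{equiv1}). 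For $\chi$: apply $(\mathrm{id}_{H^{\otimes 3}} \otimes_H \gamma)$ to the defining expression of $\chi(\alpha^{-1}(x), \alpha^{-1}(y))$, expand both $[s\alpha^{-1}(x) * s\alpha^{-1}(y)]_E$ after substituting $\gamma s\alpha^{-1} = s + \varphi$, and use $(\mathrm{id} \otimes_H s)[\alpha^{-1}(x) * \alpha^{-1}(y)]_L = (\mathrm{id} \otimes_H s\alpha^{-1})[x*y]_L$; the cross-terms collect into exactly condition (\ref{equiv2}).

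For the reverse direction, suppose $\varphi : L \to M$ witnesses the equivalence $(\chi_{(\beta,\alpha)}, \psi_{(\beta,\alpha)}) \sim (\chi, \psi)$. Using the $H$-module decomposition $E \cong s(L) \oplus M$, define
\begin{align*}
\gamma : E \to E, \qquad \gamma(s(x) + u) := s(\alpha(x)) + \varphi(\alpha(x)) + \beta(u).
\end{align*}
An explicit inverse is $\gamma^{-1}(s(y) + v) = s(\alpha^{-1}(y)) + \beta^{-1}(v - \varphi(y))$, so $\gamma$ is bijective $H$-linear, preserves $M$, restricts to $\beta$ on $M$, and satisfies $p \gamma s = \alpha$. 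What remains is to verify that $\gamma$ preserves the pseudobracket, which splits into three cases: $[\gamma(u) * \gamma(v)]_E = (\mathrm{id} \otimes_H \gamma)[u*v]_M$ is immediate from $\gamma|_M = \beta$; the mixed case $[\gamma s(x) * \gamma(u)]_E = (\mathrm{id} \otimes_H \gamma)[s(x)*u]_E$ expands (after the substitution $x' = \alpha(x)$, $u' = \beta(u)$) into exactly condition (\ref{equiv1}) for $\varphi$; and the pure $s(L)$-case $[\gamma s(x) * \gamma s(y)]_E = (\mathrm{id} \otimes_H \gamma)[s(x)*s(y)]_E$ expands, after inserting $\gamma s = s\alpha + \varphi\alpha$ and using $[s(\alpha(x))*s(\alpha(y))]_E = \chi(\alpha(x),\alpha(y)) + (\mathrm{id} \otimes_H s)[\alpha(x)*\alpha(y)]_L$, into condition (\ref{equiv2}) for $\varphi$ (after substituting $x' = \alpha(x)$, $y' = \alpha(y)$).

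The routine but only mildly delicate step is the change of variables $x' = \alpha(x), y' = \alpha(y), u' = \beta(u)$ in the pure and mixed cases: the factors $(\mathrm{id} \otimes_H \alpha^{-1})$ and $(\mathrm{id} \otimes_H \beta^{-1})$ that appear from $\alpha$ and $\beta$ being $H$-module automorphisms are exactly what turns $\chi(\alpha^{-1}(\cdot), \alpha^{-1}(\cdot))$ into $\chi_{(\beta,\alpha)}$ after reapplying $(\mathrm{id} \otimes_H \beta)$. The main obstacle is bookkeeping: in both directions one must recognize that the tangled expressions arising from expanding $[(s + \varphi\circ\alpha)(x) * (s + \varphi\circ\alpha)(y)]_E$ reorganize into precisely the two equivalence conditions, with the pseudobracket $[\varphi \cdot * \varphi \cdot]_M$ term (which would vanish in the abelian case) playing a genuine role. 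Once the forward direction is carried out, the reverse direction is essentially the same calculation read in reverse, and independence of the whole argument from the choice of section was already established in the preceding lemma.
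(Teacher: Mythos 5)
Your proposal is correct and follows essentially the same route as the paper: the forward direction uses the same map $\varphi = \gamma s\alpha^{-1} - s$ to exhibit the equivalence of the cocycles $(\chi_{(\beta,\alpha)},\psi_{(\beta,\alpha)})$ and $(\chi,\psi)$, and the converse constructs the same automorphism $\gamma(u+s(x)) = \beta(u) + \varphi\alpha(x) + s\alpha(x)$ and verifies it preserves the pseudobracket. Your only cosmetic differences — giving an explicit inverse for $\gamma$ instead of separate injectivity/surjectivity checks, and splitting the bracket verification into pure and mixed cases rather than one combined expansion — are equivalent to the paper's computation.
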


\begin{proof}
    Let the pair $(\beta, \alpha) \in \mathrm{Aut} (M) \times \mathrm{Aut}(L)$ be inducible. In other words, there exists a Lie $H$-pseudoalgebra automorphism $\gamma \in \mathrm{Aut}_M (E)$ such that $\gamma|_M  = \beta$ and $\overline{\gamma} := p \gamma s = \alpha$ (for any section $s$ of the map $p$). We define a map $\varphi : L \rightarrow E$ by $\varphi (x) := (\gamma s - s \alpha) \alpha^{-1} (x) = (\gamma s \alpha^{-1} - s)(x)$, for $x \in L$. Observe that $p \varphi (x) = (p \gamma s \alpha^{-1} - ps)(x) = 0$, for $x \in L$. This shows that $\mathrm{im} (\varphi) \subset \mathrm{ker} (p) = \mathrm{im} (i) \cong M$. Thus, $\varphi$ can be realized as a map $\varphi : L \rightarrow M$. Next, if $(\chi, \psi)$ is the non-abelian $2$-cocycle corresponding to the given non-abelian extension and induced by the section $s$, then we have
    \begin{align*}
        \psi_{(\beta, \alpha)} (x, u) - \psi (x, u) =~& (\mathrm{id}_{H^{\otimes 2}} \otimes_H \beta) \psi (\alpha^{-1} (x) , \beta^{-1} (u)) - \psi (x, u) \\
        =~& (\mathrm{id}_{H^{\otimes 2}} \otimes_H \beta) [s\alpha^{-1} (x) * \beta^{-1} (u)]_E - [s(x) * u]_E \\
        =~& [\gamma s \alpha^{-1} (x) * u]_E - [s(x) * u]_E   \quad (\text{as } \beta = \gamma|_M) \\
        =~& [\varphi (x) * u]_E.
    \end{align*}
    Similarly, by a direct calculation, we get that
    \begin{align*}
        \chi_{(\beta, \alpha)} (x, y) - \chi (x, y) = \psi (x, \varphi (y) ) - (\sigma_{12} \otimes_H 1) ~\psi (y, \varphi (x)) - (\mathrm{id}_{H^{\otimes 2}} \otimes_H  \varphi) [x*y]_L + [\varphi (x) * \varphi (y)]_M, 
    \end{align*}
    for $x, y \in L.$ These two observations shows that the non-abelian $2$-cocycles $(\chi_{(\beta, \alpha)}, \psi_{(\beta, \alpha)})$ and $(\chi, \psi)$ are equivalent by the map $\varphi$. Hence $\mathcal{W} ((\beta, \alpha)) = [ (\chi_{(\beta, \alpha)}, \psi_{(\beta, \alpha)}) - (\chi, \psi)  ] = 0$.

    Conversely, let $(\beta, \alpha) \in \mathrm{Aut}(M) \times \mathrm{Aut} (L)$ be such that $\mathcal{W} ((\beta, \alpha )) = 0$. Consider any section $s$ of the map $p$ and let $(\chi, \psi)$ be the non-abelian $2$-cocycle corresponding to the given non-abelian extension and induced by $s$. Since $\mathcal{W}((\beta, \alpha)) = 0$, it follows that the non-abelian $2$-cocycles $(\chi_{(\beta, \alpha)}, \psi_{(\beta, \alpha)}) $ and $(\chi, \psi) $ are equivalent, and say by the map $\varphi : L \rightarrow M$. We now define a $H$-linear map $\gamma : E \rightarrow E$ by
    \begin{align*}
        \gamma (e) = \gamma (u + s(x)) = (\beta (u) + \varphi \alpha (x)) + s (\alpha (x)), \text{ for } e = u+ s(x) \in E.
    \end{align*}
    Here we have used the fact that any element $e \in E$ can be written as $e= u +s (x)$, for some $u \in M$ and $x\in L$.

    We claim that $\gamma$ is bijective. Suppose $\gamma (e) = \gamma (u+ s(x)) = 0$. Then it follows that $s (\alpha (x)) = 0$. Since $s$ and $\alpha$ are both injective maps, we have $x = 0$. Using this in the definition of $\gamma$, we get that $\beta (u) = 0$ which implies that $u = 0$. That is, $e = u + s(x) = 0$. Hence $\gamma$ is injective. To show that $\gamma$ is surjective, we consider an arbitrary element $e = u + s(x) \in E$. Then the element $e' = (\beta^{-1} (u) - \beta^{-1} \varphi (x)) + s (\alpha^{-1}(x)) \in E$ and we have $\gamma (e') = e$. This proves that $\gamma$ is surjective and hence our claim follows.

    In the following, we show that the map $\gamma: E \rightarrow E$ is a Lie $H$-pseudoalgebra homomorphism. Take any two elements $e = u + s(x)$ and $e' = v+ s(y)$ from $E$. Then we have
    \begin{align*}
        &[\gamma (e) * \gamma (e')]_E \\
        &= [  (\beta (u) + \varphi \alpha (x) + s \alpha (x)) * (\beta (v) + \varphi \alpha (y) + s \alpha (y))]_E\\
        &= [\beta (u) * \beta (v)]_E + [\beta (u) * \varphi \alpha (y)]_E + [\beta(u) * s \alpha (y)]_E + [\varphi \alpha (x) * \beta (v)]_E + [\varphi \alpha (x) * \varphi \alpha (y)]_E \\
        & \quad  + [\varphi \alpha (x) * s \alpha (y)]_E + [s \alpha (x) * \beta (v) ]_E + [s \alpha (x) * \varphi \alpha (y)]_E + [s \alpha (x) * s \alpha (y)]_E\\
        &= [\beta (u) * \beta(v)]_E - (\sigma_{12} \otimes_H \beta) \psi (y, u) + (\sigma_{12} \otimes_H \beta) \psi (\alpha (y), \beta (u)) \\
        & \quad  -  (\sigma_{12} \otimes_H \beta) \psi (\alpha (y), \beta (u)) +  (\mathrm{id}_{H^{\otimes 2}} \otimes_H \beta) \psi (x, v) - \psi (\alpha (x), \beta (v)) \\
        & \quad  +  (\mathrm{id}_{H^{\otimes 2}} \otimes_H \beta) \big(  \chi (x, y) - \chi (\alpha(x), \alpha (y)) - \psi (  \alpha(x), \varphi \alpha (y)) +  (\sigma_{12} \otimes_H 1) \psi (\alpha (y), \varphi \alpha (x)) + [x * y]_L  \big) \\
        & \quad  -  (\sigma_{12} \otimes_H \beta) \psi (\alpha (y), \varphi \alpha (x)) + \psi (\alpha (x), \beta (v)) + \psi (\alpha (x), \varphi \alpha (y)) \\
        & \quad  +  (\mathrm{id}_{H^{\otimes 2}} \otimes_H \beta) \chi (\alpha (x), \alpha (y)) +  (\mathrm{id}_{H^{\otimes 2}} \otimes_H s) [\alpha (x) * \alpha (y)]_L \\
        &= (\mathrm{id}_{H^{\otimes 2}} \otimes_H \beta) \big(  [u * v]_M - (\sigma_{12} \otimes_H 1) \psi(y, u) + \psi (x, v) + \chi (x, y)  \big) \\
        & \quad  + (\mathrm{id}_{H^{\otimes 2}} \otimes_H \lambda) [x * y]_L + (\mathrm{id}_{H^{\otimes 2}} \otimes_H s) [\alpha (x) * \alpha (y)]_L \\
        &= (\mathrm{id}_{H^{\otimes 2}} \otimes_H \beta) \big(  [u * v]_M + [u * s(y)]_E + [s(x) * v]_E + \chi (x, y)  \big)  + (\mathrm{id}_{H^{\otimes 2}} \otimes_H \lambda) [x * y]_L \\
        & \quad  + (\mathrm{id}_{H^{\otimes 2}} \otimes_H s \alpha) [x * y]_L \\
        &= (\mathrm{id}_{H^{\otimes 2}} \otimes_H \gamma) \big(  [u * v]_E + [u * s(y)]_E + [s(x) * v]_E + \chi (x, y)  +  (\mathrm{id}_{H^{\otimes 2}} \otimes_H s) [x * y]_L \big)\\
        &= (\mathrm{id}_{H^{\otimes 2}} \otimes_H \gamma) \big(   [u * v]_E + [u * s(y)]_E + [s(x) * v]_E + [s(x) * s (y)]_E \big)  \\
        &= (\mathrm{id}_{H^{\otimes 2}} \otimes_H \gamma) [ (u + s(x)) * (v + s(y))   ]_E\\
        &= (\mathrm{id}_{H^{\otimes 2}} \otimes_H \gamma) [e * e']_E.
    \end{align*}
    Combining this with the fact that $\gamma$ is bijective, we have that $\gamma \in \mathrm{Aut}(E)$. Finally, for $u \in M$ and $x \in L$, 
    \begin{align*}
        &\gamma (u) = \gamma (u + s(0)) = \beta (u),\\
        &\overline{\gamma} (x) = p \gamma s (x) = p \gamma (0 + s(x)) = p (\varphi \alpha (x) + s (\alpha (x))) = ps (\alpha (x) )= \alpha (x). 
    \end{align*}
    Hence $\tau = (\gamma|_M, \overline{\gamma}) =(\beta, \alpha)$ which proves that the pair $(\beta, \alpha)$ is inducible.
\end{proof}

\begin{remark}
    It follows from Theorem \ref{nece-suff} that $\mathcal{W} \big( (\beta, \alpha) \big)$ is an obstruction for the inducibility of the pair $(\beta, \alpha)$ of Lie $H$-pseudoalgebra automorphisms. Thus, if the Wells map vanishes identically, then any pair of Lie $H$-pseudoalgebra automorphisms in $\mathrm{Aut} (M) \times \mathrm{Aut}(L)$ is inducible.
\end{remark}

In the context of classical Lie algebras, while considering the abelian extensions, it is well-known that the Wells map fits into a short exact sequence \cite{bardakov}. Our next aim is to generalize this sequence in the context of Lie $H$-pseudoalgebras (and for non-abelian extensions).

Let $ 0 \rightarrow (M, [\cdot * \cdot]_M) \xrightarrow{i} (E, [\cdot * \cdot]_E) \xrightarrow{p} (L, [\cdot * \cdot]_L) \rightarrow 0$ be a given non-abelian extension of Lie $H$-pseudoalgebras. We define a subgroup $\mathrm{Aut}_M^{M, L} (E) \subset \mathrm{Aut}_M (E)$ by
\begin{align*}
  \mathrm{Aut}_M^{M, L} (E) = \{ \gamma \in  \mathrm{Aut}_M (E) | ~ \tau (\gamma) = (\mathrm{id}_M, \mathrm{id}_L) \}. 
\end{align*}
That is, $\mathrm{Aut}_M^{M, L} (E)$ is the group of all Lie $H$-pseudoalgebra automorphisms on $E$ whose restriction on $M$ is the identity map $\mathrm{id}_M$ and projection on $L$ is also the identity map $\mathrm{id}_L$. Then we have the following result.

\begin{theorem}\label{wells-thm} (Wells exact sequence) Let $ 0 \rightarrow (M, [\cdot * \cdot]_M) \xrightarrow{i} (E, [\cdot * \cdot]_E) \xrightarrow{p} (L, [\cdot * \cdot]_L) \rightarrow 0$ be a non-abelian extension of Lie $H$-pseudoalgebras. Then there is an exact sequence connecting various automorphism groups
\begin{align}\label{wells-es}
    \xymatrix{
    1 \ar[r] & \mathrm{Aut}_M^{M, L} (E) \ar[r]^\iota & \mathrm{Aut}_M (E) \ar[r]^\tau & \mathrm{Aut} (M) \times \mathrm{Aut}(L) \ar[r]^{\mathcal{W}} & H^2_{nab} (L, M).
    }
\end{align}
\end{theorem}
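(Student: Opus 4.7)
The plan is to verify exactness at each of the three nontrivial spots in the sequence, leveraging the definitions and Theorem \ref{nece-suff} heavily so that almost no new computation is required.

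First I would dispense with the left end. Since $\iota$ is literally the inclusion of a subgroup, injectivity is immediate and there is nothing to check beyond noting that $\iota$ is a group homomorphism. For exactness at $\mathrm{Aut}_M(E)$, observe that by definition $\mathrm{Aut}_M^{M,L}(E) = \{\gamma \in \mathrm{Aut}_M(E) \mid \tau(\gamma) = (\mathrm{id}_M, \mathrm{id}_L)\} = \ker(\tau)$. Hence $\mathrm{im}(\iota) = \mathrm{Aut}_M^{M,L}(E) = \ker(\tau)$, which gives the exactness at $\mathrm{Aut}_M(E)$ with no further work.

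The substantive step is exactness at $\mathrm{Aut}(M) \times \mathrm{Aut}(L)$, namely $\mathrm{im}(\tau) = \ker(\mathcal{W})$. Here I would simply invoke Theorem \ref{nece-suff}: an element $(\beta, \alpha) \in \mathrm{Aut}(M) \times \mathrm{Aut}(L)$ lies in $\mathrm{im}(\tau)$ precisely when it is inducible, and Theorem \ref{nece-suff} states that $(\beta, \alpha)$ is inducible if and only if $\mathcal{W}((\beta, \alpha)) = 0$. Composing these two equivalences yields $\mathrm{im}(\tau) = \ker(\mathcal{W})$, which completes the exactness.

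Because the Wells map is only a map of pointed sets and not a group homomorphism in general (the codomain $H^2_{nab}(L,M)$ has no natural group structure in this non-abelian setting), the only sensible notion of exactness at the right-most spot is this set-theoretic one between the image of $\tau$ and the fiber of $\mathcal{W}$ over the distinguished class $[(\chi,\psi)]$ corresponding to the given extension, and this is exactly what Theorem \ref{nece-suff} delivers. The main obstacle in the whole argument has thus already been absorbed into Theorem \ref{nece-suff}; the remaining proof is essentially bookkeeping and should occupy only a few lines.
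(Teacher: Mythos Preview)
Your proposal is correct and follows essentially the same approach as the paper: both proofs dispatch exactness at the first two spots directly from the definitions of $\iota$ and $\mathrm{Aut}_M^{M,L}(E)$, and reduce exactness at $\mathrm{Aut}(M)\times\mathrm{Aut}(L)$ entirely to Theorem~\ref{nece-suff}. One small wording slip in your closing commentary: the kernel of $\mathcal{W}$ is the fiber over the zero class (equivalently, where $(\chi_{(\beta,\alpha)},\psi_{(\beta,\alpha)})$ is equivalent to $(\chi,\psi)$), not the fiber over $[(\chi,\psi)]$ itself---but this does not affect the argument.
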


\begin{proof}
    Note that the sequence (\ref{wells-es}) is exact at the first term as the inclusion map $\iota : \mathrm{Aut}_M^{M, L} (E) \rightarrow \mathrm{Aut}_M (E)$ is injective. To show that the sequence is exact at the second term, we first take an element $\gamma \in \mathrm{ker} (\tau)$. Then we have $\tau (\gamma) = (\gamma|_M, \overline{\gamma}) = (\mathrm{id}_M, \mathrm{id}_L)$. Hence $\gamma \in \mathrm{Aut}_{M}^{M, L} (E)$. Conversely, if $\gamma \in \mathrm{Aut}_{M}^{M, L} (E)$ then we have $\gamma \in \mathrm{ker} (\tau)$. Thus, $\mathrm{ker} (\tau) = \mathrm{Aut}_M^{M, L}(E) = \mathrm{im} (\iota)$. Hence the sequence is exact at the second term. Finally, we take an element $(\beta, \alpha) \in \mathrm{ker} (\mathcal{W})$. Then by Theorem \ref{nece-suff}, it follows that the pair $(\beta, \alpha)$ is inducible. Hence there exists a Lie $H$-pseudoalgebra automorphism $\gamma \in \mathrm{Aut}_M (E)$ such that $\tau (\gamma) = (\beta, \alpha)$, i.e. $(\beta, \alpha) \in \mathrm{im} (\tau)$. Conversely, if $(\beta, \alpha) \in \mathrm{im} (\tau)$ then $(\beta, \alpha)$ is inducible and hence $\mathcal{W}((\beta, \alpha)) = 0$ which shows that $(\beta, \alpha) \in \mathrm{ker} (\mathcal{W})$. Therefore, $\mathrm{ker} (\mathcal{W}) = \mathrm{im} (\tau)$ and thus the sequence is exact at the third place.
\end{proof}

In the following, we will discuss the Wells short exact sequence in abelian extensions of a Lie $H$-pseudoalgebra. Let
 \begin{align}\label{abb-ext}
     \xymatrix{
     0 \ar[r] & (M, [\cdot * \cdot]_M = 0) \ar[r]^i & (E, [\cdot * \cdot]_E) \ar[r]^p & (L, [\cdot * \cdot]_L) \ar[r] & 0
     }
     \end{align}
     be an abelian extension of the Lie $H$-pseudoalgebra $(L, [\cdot * \cdot]_L)$ by the representation $M$. Let $\psi \in \mathrm{Hom}_{H^{\otimes 2}} (L \otimes M, H^{\otimes 2} \otimes_H M)$ denotes the action map of the representation. We define a subgroup $C_\psi \subset \mathrm{Aut} (M) \times \mathrm{Aut} (L)$ by
     \begin{align*}
         C_\psi : = \{ (\beta, \alpha) \in \mathrm{Aut} (M) \times \mathrm{Aut} (L) |~ (\mathrm{id}_{H^{\otimes 2}} \otimes_H \beta) \psi (x, u) = \psi (\alpha (x), \beta(u)), \text{for } x \in L, u \in M \}.
     \end{align*}
We observe that if $\gamma \in \mathrm{Aut}_M (E)$ then $\tau (\gamma) = (\gamma|_M, \overline{\gamma}) \in C_\psi$. 

Suppose the skew-symmetric map $\chi \in \mathrm{Hom}_{H^{\otimes 2}} (L \otimes L, H^{\otimes 2} \otimes_H M)$ is the $2$-cocycle corresponding to the given abelian extension. Then for any pair $(\beta, \alpha) \in \mathrm{Aut} (M) \times \mathrm{Aut} (L)$, the map $\chi_{(\beta, \alpha)}$ need not be a $2$-cocycle. However, if $(\beta, \alpha) \in C_\psi$ then $\chi_{(\beta, \alpha)}$ turns out to be a $2$-cocycle. Thus, the Wells map here is given by $\mathcal{W} : C_\psi \rightarrow H^2(L, M)$ with $\mathcal{W} ((\beta, \alpha )) := [\chi_{(\beta, \alpha)} - \chi]$, for $(\beta, \alpha)\in C_\psi$. With the above notations, we have the following results which are similar to Theorem \ref{nece-suff} and Theorem \ref{wells-thm}, respectively.

\begin{theorem}
    Given an abelian extension (\ref{abb-ext}), a pair $(\beta, \alpha) \in \mathrm{Aut} (M) \times \mathrm{Aut} (L)$ is inducible if and only if $(\beta, \alpha) \in C_\psi$ and $\mathcal{W} ((\beta, \alpha)) = 0$.
\end{theorem}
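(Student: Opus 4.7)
The plan is to mirror the proof of Theorem \ref{nece-suff}, paying attention to the places where the hypothesis $[\cdot * \cdot]_M = 0$ alters the argument. In the non-abelian setting, any $\gamma \in \mathrm{Aut}_M(E)$ lifting $(\beta, \alpha)$ automatically witnesses a compatibility between $\beta$ and the cocycle $\psi$ up to a correction term of the form $[\cdot * \cdot]_M$; switching to the abelian setting kills that correction and thereby upgrades the compatibility into the exact $C_\psi$ relation. This is why $C_\psi$ must be imposed by hand in the abelian case but was automatic before.

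For necessity, I would suppose $(\beta, \alpha)$ is inducible via $\gamma \in \mathrm{Aut}_M(E)$, set $\varphi := \gamma s \alpha^{-1} - s$, and verify as in Theorem \ref{nece-suff} that $\varphi$ lands in $M$. Applying the pseudoalgebra homomorphism property of $\gamma$ to $[s(x) * u]_E$ and using $\gamma s(x) = s\alpha(x) + \varphi\alpha(x)$ produces
\begin{align*}
\psi(\alpha(x), \beta(u)) + [\varphi\alpha(x) * \beta(u)]_M = (\mathrm{id}_{H^{\otimes 2}} \otimes_H \beta)\psi(x, u).
\end{align*}
Since $[\cdot * \cdot]_M = 0$, the middle term vanishes and we recover exactly $(\beta, \alpha) \in C_\psi$. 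The vanishing $\mathcal{W}((\beta, \alpha)) = 0$ then follows by specialising the cocycle calculation in the proof of Theorem \ref{nece-suff}: the same $\varphi$ gives $\chi_{(\beta, \alpha)} - \chi = \delta(\varphi)$, now interpreted inside the ordinary Lie $H$-pseudoalgebra cochain complex of Section \ref{sec2}.

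For sufficiency, given $(\beta, \alpha) \in C_\psi$ and $\mathcal{W}((\beta, \alpha)) = 0$, I would pick $\varphi \in \mathrm{Hom}_H(L, M)$ with $\chi_{(\beta, \alpha)} - \chi = \delta(\varphi)$, and define $\gamma : E \to E$ exactly as in the proof of Theorem \ref{nece-suff} by $\gamma(u + s(x)) := \beta(u) + \varphi\alpha(x) + s\alpha(x)$. Bijectivity of $\gamma$ is obtained as there. For the homomorphism check, expand $[\gamma(u + s(x)) * \gamma(v + s(y))]_E$ into the same nine terms; the ones purely inside $M$ collapse because $[\cdot * \cdot]_M = 0$, and the surviving ones are reassembled via the coboundary identity together with $C_\psi$, which converts $(\mathrm{id}_{H^{\otimes 2}} \otimes_H \beta)\psi(x, v)$ to $\psi(\alpha(x), \beta(v))$ and lets the remaining expression match $(\mathrm{id}_{H^{\otimes 2}} \otimes_H \gamma)[(u + s(x)) * (v + s(y))]_E$. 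The identities $\gamma|_M = \beta$ and $p\gamma s = \alpha$ are immediate from the definition. The main obstacle is purely bookkeeping: ensuring that the $C_\psi$ hypothesis is invoked at precisely the step where the non-abelian proof exploited a correction hidden inside $[\cdot * \cdot]_M$, since without $C_\psi$ no such automatic absorption is available and $\gamma$ fails to be a pseudoalgebra homomorphism.
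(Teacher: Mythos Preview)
Your proposal is correct and follows precisely the approach the paper intends: the paper does not give an explicit proof of this theorem but simply states that it is ``similar to Theorem~\ref{nece-suff}'', and your adaptation of that argument---with the key observation that $[\cdot * \cdot]_M = 0$ both forces the $C_\psi$ condition in the necessity direction and requires it as an explicit hypothesis in the sufficiency direction---is exactly what is needed. The paper even records in the paragraph preceding the theorem that $\tau(\gamma) \in C_\psi$ for any $\gamma \in \mathrm{Aut}_M(E)$, which is the necessity half of your $C_\psi$ check.
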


\begin{theorem}
    Given an abelian extension (\ref{abb-ext}), we have the following short exact sequence
    \begin{align*}
    \xymatrix{
        1 \ar[r] & \mathrm{Aut}_M^{M,L} (E) \ar[r]^{\iota} & \mathrm{Aut}_M (E) \ar[r]^{\tau} & C_\psi \ar[r]^{\mathcal{W}} & H^2(L,M).
        }
    \end{align*}
\end{theorem}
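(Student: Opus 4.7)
The proof will follow the same three-part exactness verification as in Theorem \ref{wells-thm}, with the extra subtlety that the codomain of $\tau$ has been cut down from $\mathrm{Aut}(M) \times \mathrm{Aut}(L)$ to the subgroup $C_\psi$. Accordingly, the plan is: first verify that $\tau$ genuinely lands in $C_\psi$; then establish exactness at each of the three middle terms, largely by reusing the arguments of the non-abelian case together with the preceding abelian inducibility theorem.

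For exactness at $\mathrm{Aut}_M^{M,L}(E)$, the inclusion $\iota$ is injective by construction, so there is nothing to prove. For exactness at $\mathrm{Aut}_M(E)$, the equality $\ker(\tau) = \mathrm{Aut}_M^{M,L}(E) = \mathrm{im}(\iota)$ is immediate from the defining condition $\tau(\gamma) = (\mathrm{id}_M, \mathrm{id}_L)$, as already used in the proof of Theorem \ref{wells-thm}. However, one must first check that for every $\gamma \in \mathrm{Aut}_M(E)$ the pair $(\gamma|_M, \overline{\gamma})$ actually belongs to $C_\psi$. For this, fix a section $s$, take $x \in L$ and $u \in M$, and write $\gamma(s(x)) = s(\overline{\gamma}(x)) + m$ with $m \in M$. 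Applying $\gamma$ to $\psi(x,u) = [s(x) * u]_E$ and using that $\gamma$ is a Lie $H$-pseudoalgebra homomorphism gives
\begin{align*}
(\mathrm{id}_{H^{\otimes 2}} \otimes_H \gamma|_M) \psi(x,u) &= [\gamma(s(x)) * \gamma(u)]_E \\
&= [s(\overline{\gamma}(x)) * \gamma|_M(u)]_E + [m * \gamma|_M(u)]_M \\
&= \psi(\overline{\gamma}(x), \gamma|_M(u)),
\end{align*}
where the $M$-bracket term vanishes because $[\cdot * \cdot]_M = 0$. This is precisely the condition defining $C_\psi$.

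For exactness at $C_\psi$, I would apply the previous theorem (abelian inducibility criterion): if $(\beta,\alpha) \in \ker(\mathcal{W})$, then $(\beta,\alpha) \in C_\psi$ with $\mathcal{W}((\beta,\alpha)) = 0$, so the pair is inducible and hence lies in $\mathrm{im}(\tau)$. Conversely, any $(\beta,\alpha) \in \mathrm{im}(\tau)$ is inducible by definition, so the abelian inducibility theorem gives $\mathcal{W}((\beta,\alpha)) = 0$. This yields $\mathrm{im}(\tau) = \ker(\mathcal{W})$.

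The only step that requires genuine work beyond what is already done for Theorem \ref{wells-thm} is the verification that $\mathrm{im}(\tau) \subset C_\psi$, and the compatibility check that $\mathcal{W}$ is well-defined on $C_\psi$ (i.e., that $\chi_{(\beta,\alpha)} - \chi$ is a $2$-cocycle in $C^2(L,M)$ for $(\beta,\alpha) \in C_\psi$) — but this latter point has already been noted in the paragraph preceding the theorem statement. I expect no essential obstacle: the abelianness of $M$ ensures that the extra cross terms appearing in the non-abelian arguments collapse, and the restriction to $C_\psi$ is exactly what is needed for the cocycle condition on $\chi_{(\beta,\alpha)}$ to be preserved.
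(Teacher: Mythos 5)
Your proposal is correct and takes essentially the same route the paper intends: the paper omits the proof, remarking only that it is ``similar to Theorem \ref{nece-suff} and Theorem \ref{wells-thm}'', and your argument is precisely that adaptation — exactness at the first two terms as in Theorem \ref{wells-thm}, exactness at $C_\psi$ via the abelian inducibility criterion. Your explicit check that $\tau(\gamma)=(\gamma|_M,\overline{\gamma})\in C_\psi$ (using $\gamma(s(x))-s(\overline{\gamma}(x))\in M$ and $[\cdot * \cdot]_M=0$) correctly fills in the observation the paper states without proof.
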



\medskip

\noindent {\bf Acknowledgements.} The author would like to thank the Department of Mathematics, IIT Kharagpur for providing the beautiful academic atmosphere where the research has been carried out.

\medskip

\noindent {\bf Data Availability Statement.} Data sharing does not apply to this article as no new data were created or analyzed in this study.

\end{document}